\newtheorem{theorem}{Theorem}[section]
\newtheorem{lemma}[theorem]{Lemma}
\newtheorem{Conjecture}[theorem]{Conjecture}
\theoremstyle{definition}
\newtheorem{definition}{Definition}[section]
\theoremstyle{remark}
\numberwithin{equation}{section}
\newcommand{\F}{\mathbb{F}_q}
\newcommand{\Fm}{\mathbb{F}_{q^m}}
\newcommand{\N}{\mathfrak{N}}
\newcommand{\R}{\mathfrak{R}}
\newcommand{\T}{\mathfrak{T}}
\newcommand{\E}{\mathfrak{E}}
\newcommand{\vt}{\vartheta}
\newcommand{\si}{\sigma}
\newcommand{\Nf}{\mathfrak{N}_{f,a}(e_1, e_2, g_1, g_2)}
\newcommand{\C}{\chi_{f,a}(d_1, d_2, h_1, h_2)}
\DeclareMathOperator{\Rad}{Rad}
\DeclareMathOperator{\Tr}{Tr}
\title{Primitive normal pairs of elements with one prescribed trace}
\keywords{Finite fields; Primitive elements; Normal elements; Additive and multiplicative characters; Trace}
\subjclass[2020]{12E20, 11T23}
\author{Arpan Chandra Mazumder}
\address{Department of Mathematical Sciences, Tezpur University, Tezpur, Assam, 784028, India}
\email{arpan10@tezu.ernet.in}
\author{Himangshu Hazarika}
\address{Department of Mathematics, Tezpur College, Tezpur, Assam, 784001, India}
\email{diku\_95@tezu.ernet.in}
\author{Dhiren Kumar Basnet}
\address{Department of Mathematical Sciences, Tezpur University, Tezpur, Assam, 784028, India}
\email{dbasnet@tezu.ernet.in}
\author{Giorgos Kapetanakis}
\address{Department of Mathematics, University of Thessaly, 3rd km Old National Road Lamia-Athens, 35100, Lamia, Greece}
\email{kapetanakis@uth.gr}
\thanks{The first author is supported by DST INSPIRE Fellowship, under grant no. DST/INSPIRE Fellowship/2021/IF210206. }
\begin{document}
	\begin{abstract}
		Let $q, n, m \in \mathbb{N}$ be such that $q$ is a prime power, $m \geq 3$ and $a \in \F$. We establish a sufficient condition for the existence of a primitive normal pair $(\alpha, f(\alpha))$ in $\mathbb{F}_{q^m}$ over $\mathbb{F}_{q}$ such that $\Tr_{\mathbb{F}_{q^m}/\mathbb{F}_{q}}(\alpha^{-1})=a$, where $f(x) \in \mathbb{F}_{q^m}(x)$ is a rational function with degree sum $n$. In particular, for $q=5^k, ~k \geq 3$ and degree sum $n=4$, we explicitly find at most 25 choices of $(q, m)$ where existence of such pairs is not guaranteed.
	\end{abstract}
	
	\maketitle
	
	\section{Introduction}
	
	Let $q$ be a prime power. We denote by $\F$, the finite field of order $q$. For any given positive integer $m$, let $\Fm$ denote the extension field of $\F$ of degree $m$. The multiplicative group $\Fm^*$ is cyclic and a generator of this group is called a primitive element of $\Fm^*$. An element $\alpha \in \Fm$ is said to be normal over $\F$ if the set of all its conjugates with respect to $\F$, that is, if the set $\{\alpha, \alpha^{q}, \ldots , \alpha^{q^{m-1}}\}$  forms a basis of $\Fm$ over $\F$. The trace of an element $\alpha \in \Fm$ over $\F$, denoted by $\Tr_{\mathbb{F}_{q^m}/\mathbb{F}_{q}}(\alpha)$, is the sum of all conjugates of $\alpha$ with respect to $\F$, that is, $\Tr_{\mathbb{F}_{q^m}/\mathbb{F}_{q}}(\alpha)= \alpha+\alpha^{q}+\ldots+\alpha^{q^{m-1}}$. 
	
	An element $\alpha \in \Fm$ is said to be a primitive normal if it is both primitive and normal over $\F$. We refer the readers to \cite{lidl} for the existence of primitive and normal elements in finite fields. In \cite{lens}, Lenstra and Schoof first established the simultaneous existence of both primitive and normal elements in finite fields. Later on, by implementing sieving techniques, Cohen and Huczynska \cite{cohen1} gave a computer-free proof of it. 
	
	For any $\alpha, \beta \in \Fm$, we call a pair $(\alpha, \beta)$ to be a primitive normal pair if both $\alpha$ and $\beta$ are primitive and normal elements over $\F$. For any rational function $f(x) \in \Fm(x)$, the existence of primitive pairs and also primitive normal pairs $(\alpha, f(\alpha))$ has been an area of much research interest and a significant amount of work has been published in this direction \cite{sharma2017existence, sharma2018existence, basnet, avnish}. 
	
	In 1999, Cohen and Hachenberger \cite{cohen2} proved the existence of a primitive normal element $\alpha \in \F$ with a prescribed trace. In 2001, Chou and Cohen \cite{chou} proved the existence of an element $\alpha$ such that both $\alpha$ and $\alpha^{-1}$ have zero traces over $\F$. In 2018, Gupta, Sharma and Cohen \cite{gupta1}, proved the existence of a primitive pair $(\alpha, \alpha+\alpha^{-1})$ with $\Tr_{\mathbb{F}_{q^m}/\mathbb{F}_{q}}(\alpha)=a$, for any prescribed $a \in \F$ and for all $q$ and $n \geq 5$. Recently, R. K. Sharma et al. \cite{hariom} proved the existence of a primitive pair $(\alpha, f(\alpha))$ such that $\alpha$ is normal over $\F$ and $\Tr_{\mathbb{F}_{q^m}/\mathbb{F}_{q}}(\alpha^{-1})=a$, for a prescribed $a \in \F$, where $f(x) \in \mathbb{F}_{q^m}(x)$ is a rational function with some exceptions. 
	
	In this article, we consider a primitive normal element $\alpha \in \Fm$ and 
	explore the existence of a primitive normal pair $(\alpha, f(\alpha))$ such that $\Tr_{\mathbb{F}_{q^m}/\mathbb{F}_{q}}(\alpha^{-1})=a$, where $f(x) \in \mathbb{F}_{q^m}(x)$ is a rational function with some minor restrictions. We consider rational functions $f \in \Fm(x)$ which belong to the set $\E_n$ as defined below.
	
	\begin{definition}
		The set $\E_n$ is the collection of rational functions $f \in \Fm(x)$ with simplest form $f_1/f_2$, where $n_1, n_2$ are the degrees of $f_1, f_2$ respectively, with $n_1+n_2=n$ i.e. degree sum of $f$ is $n$, such that
		
		\begin{enumerate}[label=(\roman*)]
			\item $f \neq cx^ig^d$ for some $c \in \Fm$, $i \in \mathbb{Z}$, $g \in \Fm(x)$ and $d(> 1)$ divides $q^m-1$.
			\item \label{part(ii)} $f_2$ is monic such that $f_2\neq g^{q^m}$ for any $g\in\Fm[x]$.
		\end{enumerate}
	\end{definition}
	
	We note that if $f(x) = g(x)^d$ for some $g \in \Fm(x)$ and $d>1$ which divides $q^m-1$, then $f(\alpha)$ (for any primitive element $\alpha$) is necessarily a $d^\text{th}$ power and hence it cannot be primitive. Also, part \ref{part(ii)} of the above definition forces that $n_2 \geq 1$. For the case $n_2 = 0$, i.e. for polynomials $f \in \Fm[x]$, the existence of a primitive normal element $\alpha \in \Fm$ over $\F$ such that $f(\alpha)$ is also a primitive normal element in $\Fm$ over $\F$ with prescribed $a \in \F$ has been studied in \cite{avnish2}.
	
	Furthermore, from \cite{chou} we have that, for $m \leq 2$, there does not exist any primitive element $\alpha \in \Fm$ such that $\Tr_{\mathbb{F}_{q^m}/\mathbb{F}_{q}}(\alpha^{-1})=0$. Therefore, throughout the article we shall assume that $m \geq 3$. Next we introduce some sets and notations which play an important role in this article. For $q=p^k$, we denote by $\mathbb{F}$ the algebraic closure of $\mathbb{F}_p$. Let $\T$ denote the set of pairs $(q, m)$ such that, for any $a \in \F$ and $f\in\E_n$ there exists an element $\alpha \in \Fm$ with $\Tr_{\mathbb{F}_{q^m}/\mathbb{F}_{q}}(\alpha^{-1})=a$, for which $(\alpha, f(\alpha))$ is  a primitive normal pair  in $\mathbb{F}_{q^m}$ over $\mathbb{F}_{q}$. For each $n \in \mathbb{N}$, we denote by $\omega(n)$ and $W(n)$, the number prime divisors of $n$ and the number of square-free divisors of $n$ respectively. Also for $r(x) \in \F[x]$, we denote by $\omega(r)$ and $W(r)$, the number of monic irreducible $\F$-divisors of $r$ and the number of square-free $\F$-divisors of $r$ respectively.
	
	More precisely, in Theorem~\ref{main_theorem}, we obtain an asymptotic condition for the existence of the desired pair $(\alpha,f(\alpha))$. Next, in Sections~\ref{sieve} and \ref{modified_sieve}, we relax this condition, which enables us, in Theorem~\ref{main_exact_theorem}, to obtain explicit result for fields of characteristic $5$.
	
	All non-trivial computations wherever needed in this article are carried out using SageMath \cite{sagesage}.
	
	\section{Preliminaries} 
	In this section, we recall some definitions and results and provide some preliminary notations which are used to prove the main results of this article.
	
	In our work, the characteristic functions of primitive and normal elements play an important role. To represent those functions, the idea of character of finite abelian group is necessary.
	
	\begin{definition}[Character]
		Let $\mathbb{G}$ be a finite abelian group. A character $\chi$ of the group $\mathbb{G}$ is a homomorphism from $\mathbb{G}$ to $\mathbb{S}^1$, where $\mathbb{S}^1:= \lbrace z\in \mathbb{C}: |z| = 1 \rbrace$ is the multiplicative group of complex numbers of unit modulus, i.e. $\chi(a_1a_2)= \chi(a_1)\chi(a_2)$, for all $a_1, a_2 \in \mathbb{G}$.
	\end{definition}
	
	The characters of $\mathbb{G}$ form a group under multiplication defined by $(\chi_\alpha\chi_\beta)(a)= \chi_\alpha(a)\chi_\beta(a)$, called dual group or character group of $\mathbb{G}$ denoted by $\widehat{\mathbb{G}}$, which is isomorphic to the group $\mathbb{G}$. If the order of an element (i.e. a character) of the group $\widehat{\mathbb{G}}$ is $d$, then  characters of order $d$ is denoted by $(d)$. Further, since $\widehat{\mathbb{F}^*_{q^m}} \cong \mathbb{F}^*_{q^m}$, $\widehat{\mathbb{F}^*_{q^m}}$ is cyclic and for any divisor $d$ of $q^m-1$ there are exactly $\phi(d)$ characters of order $d$ in $\widehat{\mathbb{F}^*_{q^m}}$. The special character $\chi_0: \mathbb{G} \rightarrow \mathbb{S}^1$ defined as  $\chi_0(a) = 1$, for all $a \in \mathbb{G}$ is called the trivial character of $\mathbb{G}$.
	
	In a finite field $\mathbb{F}_{q^m}$ there are two group structures, one is the additive group $\mathbb{F}_{q^m}$ and another is the multiplicative group $\mathbb{F}^*_{q^m}$. Therefore we have two types of characters pertaining to these two group structures, one is the additive character for $\mathbb{F}_{q^m}$ denoted by $\psi$ and the another one is the multiplicative character for $\mathbb{F}^*_{q^m}$ denoted by $\chi$. The multiplicative characters associated with $\mathbb{F}^*_{q^m}$ are extended from $\mathbb{F}^*_{q^m}$ to $\mathbb{F}_{q^m}$ by the rule
	\[ \chi(0)=\begin{cases}
		0,& \text{if } \chi\neq\chi_0,\\
		1,& \text{if } \chi=\chi_0. 
	\end{cases} \]
	\begin{definition}[$e$-free element]
		Let $e\mid q^m-1$, then an element $\alpha \in \mathbb{F}^{*}_{q^m}$ is called $e$-free, if $d\mid e$ and $\alpha = y^d$, for some $y \in \mathbb{F}^{*}_{q^m}$ imply $d=1$.
	\end{definition}
	It is clear from the definition that an element $\alpha \in \mathbb{F}^{*}_{q^m}$ is primitive if and only if it is a $(q^m-1)$-free element.  The $p$-free part of a natural number $r$ is denoted by $r_0$, where $r= r_0 p^k$ such that $\mathrm{gcd}(r_0, p) = 1$. It is clear from the definition that $q^m-1$ can be freely replaced by its $p$-free part.
	
	For any $e\mid q^m-1$, the characteristic function of $e$-free elements of $\mathbb{F}^{*}_{q^m}$ is defined as follows:
	\begin{equation}\label{e-free ch}
		\rho_e: \mathbb{F}^{*}_{q^m}\rightarrow \{0,1\}; \alpha \mapsto \theta(e) \sum_{d\mid e} \left( \frac{\mu(d)}{\phi(d)} \sum_{(d)} \chi_d(\alpha) \right),
	\end{equation}
	where $\theta(e):= \phi(e)/e$, $\mu$ is the M\"obius function and $\chi_d$ stands for any character of $\widehat{\mathbb{F}_{q^m}^*}$ of order $d$.
	
	The additive group of $\mathbb{F}_{q^m}$ is an $\mathbb{F}_{q}[x]$-module under the rule \par \hspace{3cm}$f\circ\alpha=\sum_{i=0}^n a_i\alpha^{q^i}$; for $\alpha\in \mathbb{F}_{q^m}$ and $f(x)= \sum_{i=0}^na_ix^i\thinspace \in \mathbb{F}_{q}[x]$.\\ For $\alpha \in \mathbb{F}_{q^m}$, the $\mathbb{F}_q$-order of $\alpha$ is the monic $\mathbb{F}_q$-divisor $g$ of $x^m-1$ of minimal degree such that $g \circ\alpha=0$.
	
	The \emph{$\mathbb{F}_q$-order} of an additive character $\chi_g \in \widehat{\mathbb{F}_{q^m}}$ is the monic $\mathbb{F}_{q}$-divisor $g$ of $x^m-1$ of minimal degree such that $\chi_g \circ g$ is the trivial character of $\widehat{\mathbb{F}_{q^m}}$, where ($\chi_g \circ g)(\alpha)= \chi_g(g \circ \alpha)$ for any $\alpha \in \Fm$.
	
	\begin{definition}[$g$-free element]
		For $g\mid x^m-1$, an element $\alpha\in \mathbb{F}_{q^m}$ is called $g$-free element if $\alpha = h \circ \beta$ for some $\beta \in \mathbb{F}_{q^m}$ and $h\mid g$ imply $h=1$.
	\end{definition}
	
	From the definition, it is obvious that an element $\alpha \in \mathbb{F}_{q^m}$ is normal if and only if it is $(x^m-1)$-free. It is clear that $x^m-1$ can be freely replaced by its $p$-radical $g_0:= x^{m_0}-1$, where $m_0$ is such that $m=m_0p^a$, here $a$ is a non-negative integer and $\mathrm{gcd} (m_0, p)=1$. 
	
	For any $g\mid x^m-1$, the characteristic function of $g$-free elements of $\mathbb{F}_{q^m}$ is defined as follows:
	\begin{equation}\label{g-free ch}
		\eta_g : \mathbb{F}_{q^m}\rightarrow \{0, 1\}; \alpha \mapsto \Theta(g) \sum_{f\mid g} \left( \frac{\mu^\prime(f)}{\Phi(f)} \sum_{(f)} \psi_f(\alpha) \right),
	\end{equation}
	where $\Theta(g):= \Phi(g)/{q^{deg(g)}}$, $\psi_f$ stands for any character of $\widehat{\mathbb{F}_{q^m}}$ of $\mathbb{F}_{q}$-order $f$ and  $\mu^\prime$ is the analogue of the M\"obius function defined as follows
	\[
		\mu^\prime(g)=\begin{cases}
			(-1)^s , & \text{if $g$ is the product of $s$ distinct irreducible monic polynomials}, \\
			0 , &\text{otherwise.}\\ 
		\end{cases}
	\]
	
	For each $a \in \mathbb{F}_{q}$ the characteristic function of the subset of $\mathbb{F}_{q^m}$ consisting of elements $\alpha$ with $\Tr_{\mathbb{F}_{q^m}/\mathbb{F}_{q}}(\alpha)=a$ is defined as follows: 
	\begin{equation}\label{Tr ch}
		\tau_a : \mathbb{F}_{q^m}\rightarrow \{0, 1\}; \alpha \mapsto \frac{1}{q}\sum_{\psi \in \widehat{\mathbb{F}_{q}}} \psi\left(\text{Tr}_{\mathbb{F}_{q^m}/\mathbb{F}_{q}}(\alpha)-a \right).
	\end{equation}
	
	The additive character $\psi_1$ defined by $\psi_1(\beta) = e^{{2{\pi}i\text{Tr}(\beta)}/p}$, for all $\beta \in \mathbb{F}_{q}$, where Tr is the absolute trace function from $\mathbb{F}_{q}$ to $\mathbb{F}_{p}$,
	is called the \emph{canonical additive character} of $\mathbb{F}_{q}$ and every additive character $\psi_\beta$
	for $\beta \in \mathbb{F}_{q}$ can be expressed in terms of the canonical additive character $\psi_1$ as
	$\psi_\beta(\gamma)=\psi_1(\beta\gamma)$, for all $\gamma \in \mathbb{F}_{q}$.
	
	Thus we have that 
	\begin{align*}
		\tau_a(\alpha) & =  \frac{1}{q}\sum_{u \in \mathbb{F}_{q}} \psi_1\left(\text{Tr}_{\mathbb{F}_{q^m}/\mathbb{F}_{q}}(u\alpha)-ua \right) \\ 
		&=  \frac{1}{q}\sum_{u \in \mathbb{F}_{q}} \hat{\psi_1}(u\alpha)\psi_1(-ua),
	\end{align*}
	where $\hat{\psi_1}$ is the lift of $\psi_1$, that is, the additive character of $\mathbb{F}_{q^m}$ defined by $\hat{\psi_1}(\alpha) = \psi_1(\text{Tr}_{{\mathbb{F}_{q^m}}/\mathbb{F}_{q}}(\alpha))$. In particular, $\hat{\psi_1}$ is the canonical additive character of $\mathbb{F}_{q^m}$.
	
	The following result was given by Weil \cite{weil} as described in \cite{pinner} at (1.1) to (1.3).
	
	\begin{lemma}\label{2.1}
		Let $f(x) \in \Fm(x)$ be a rational function. Write $f(x)= \prod_{j=1}^k f_j(x)^{r_j}$, where $f_j(x) \in \mathbb{F}_{q^m}[x]$ are irreducible polynomials and $r_j$ are non zero integers. Let $\chi$ be a non-trivial multiplicative character of $\mathbb{F}_{q^m}$ of square-free order $d$ (a divisor of $q^m-1$). Suppose that $f(x)$ is not of the form $cg(x)^d$ for any rational function $g(x)$ in $\mathbb{F}(x)$, where $\mathbb{F}$ is the algebraic closure of $\mathbb{F}_{q^m}$ and $c \in \mathbb{F}^*_{q^m}$. Then we have
		\begin{equation*}
			\left| \sum_{\alpha \in \mathbb{F}_{q^m},f(\alpha)\neq 0, \infty} \chi(f(\alpha)) \right| \leq \left({\sum_{j=1}^{k} deg(f_j)}-1\right)q^{\frac{m}{2}}.
		\end{equation*}
	\end{lemma}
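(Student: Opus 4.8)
The plan is to recognize this as a Weil-type estimate and to deduce it from the Riemann Hypothesis for curves over finite fields, packaged through the Dirichlet $L$-function attached to the composite character $\chi\circ f$ on the rational function field $\Fm(x)$. Throughout write $D:=\sum_{j=1}^{k}\deg(f_j)$, so that $D$ is exactly the degree of the reduced divisor supported on the finite zeros and poles of $f$. The hypothesis that $f$ is not of the shape $cg(x)^d$ is used precisely to guarantee that $\chi\circ f$ is a \emph{nontrivial} character of order $d$: if instead $f=cg^d$, then $\chi(f(\alpha))=\chi(c)\,\chi(g(\alpha))^d=\chi(c)$ is constant (since $\chi^d=\chi_0$), the sum is essentially $q^m\chi(c)$, and the estimate genuinely fails. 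This degenerate case is exactly what the hypothesis rules out.

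First I would attach to $\chi\circ f$ the $L$-function
\begin{equation*}
	L(T)=\prod_{P}\left(1-\chi(f(P))\,T^{\deg P}\right)^{-1},
\end{equation*}
the product ranging over the closed points $P$ of $\mathbb{P}^1_{\Fm}$ at which $f$ is a unit, with $\chi(f(P))$ the natural Frobenius value. Comparing the power-series expansion of $\log L(T)=\sum_{r\geq 1}S_r\,T^r/r$ shows that the coefficient $S_r$ is the analogous character sum over $\mathbb{F}_{q^{mr}}$; in particular $S_1$ is precisely the quantity to be bounded. The nontriviality of $\chi\circ f$ forces $L(T)$ to be a \emph{polynomial}, say $L(T)=\prod_{i=1}^{s}(1-\omega_i T)$, whence $S_1=-\sum_{i=1}^{s}\omega_i$. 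By Weil's theorem all inverse roots satisfy $|\omega_i|=q^{m/2}$, so $\bigl|\sum_{\alpha}\chi(f(\alpha))\bigr|\leq s\,q^{m/2}$, and everything reduces to the degree bound $s=\deg L\leq D-1$.

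The degree count is the technical heart of the argument. For a nontrivial character of conductor $\mathfrak{f}$ on a curve of genus $g$, the Euler--Poincar\'e formula gives $\deg L=2g-2+\deg\mathfrak{f}$; on $\mathbb{P}^1$, where $g=0$, this reads $\deg L=\deg\mathfrak{f}-2$. Since $d\mid q^m-1$ we have $\gcd(d,p)=1$, so $\chi\circ f$ ramifies only \emph{tamely}, and each ramified place $P$ (necessarily a zero or pole of $f$) contributes at most $\deg P$ to $\deg\mathfrak{f}$; the square-free order of $d$ is what lets one realise $\chi$ as an honest order-$d$ Kummer character and work with this reduced divisor. The finite zeros and poles therefore contribute at most $D$, while the place at infinity contributes at most $\deg\infty=1$ (and nothing when $\mathrm{ord}_\infty(f)\equiv 0 \pmod d$). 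Hence $\deg\mathfrak{f}\leq D+1$, giving $\deg L\leq D-1$ and the claimed bound, with the extremal degree attained exactly when infinity is ramified.

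The main obstacle is precisely this conductor bookkeeping: one must justify that $\gcd(d,p)=1$ keeps all ramification tame (so no place contributes more than its degree), and one must account correctly for the place at infinity so as to land on the sharp constant $D-1$ rather than $D$ or $D+1$. A slightly more hands-on alternative avoids $L$-functions entirely: pass to the smooth projective model $C$ of the Kummer curve $y^d=f(x)$, compute its genus by Riemann--Hurwitz (again using tameness), count points through Weil's estimate $\lvert\#C(\Fm)-(q^m+1)\rvert\leq 2g\,q^{m/2}$, and isolate the $\chi$-isotypic contribution. This recovers the same bound but requires an extra averaging over the $d$ characters together with a careful treatment of the ramified fibres, which is why I would prefer the single-character $L$-function formulation for pinning down the exact coefficient $D-1$.
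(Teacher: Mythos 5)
The first thing to note is that the paper contains no proof of this statement: Lemma~\ref{2.1} is quoted verbatim from Fu and Wan \cite{fu} and used as a black box, so your proposal can only be compared with the argument in that cited source. Your sketch is, in outline, exactly that standard argument: attach to $\chi\circ f$ a Kummer-type $L$-function on $\mathbb{P}^1_{\mathbb{F}_{q^m}}$, use the hypothesis $f\neq cg(x)^d$ to force $L(T)$ to be a polynomial, apply Weil's Riemann Hypothesis to its inverse roots, and bound $\deg L$ via the Euler--Poincar\'e formula $\deg L=\deg\mathfrak{f}-2$ together with the tame conductor estimate $\deg\mathfrak{f}\le D+1$. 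This is sound, but three details would need tightening in a full write-up. First, polynomiality of $L(T)$ requires more than nontriviality of $\chi\circ f$: a character factoring through the degree map is nontrivial, yet its $L$-function is $\left((1-\lambda T)(1-q^m\lambda T)\right)^{-1}$; what you need, and what $f\neq cg^d$ actually delivers, is \emph{geometric} nontriviality, equivalently that some zero or pole of $f$ has multiplicity not divisible by $d$ --- and this equivalence holds for every $d$, so your aside that square-freeness of $d$ is needed to realise $\chi$ as a Kummer character is a red herring. Second, the claim that all inverse roots satisfy $|\omega_i|=q^{m/2}$ exactly is false in general: when $\infty$ is unramified a trivial zero of absolute value $1$ can occur; this only helps the bound, but it should read $|\omega_i|\le q^{m/2}$. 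Third, your Euler product runs over points where $f$ is a unit, which is not the same as the set of unramified points (a zero or pole with multiplicity divisible by $d$ is unramified but omitted, and $\infty$ may or may not be included); each discrepancy shifts $S_1$ by a root of unity, and these $O(1)$ terms must be absorbed in the same bookkeeping that yields the sharp constant $D-1$. None of these is a fatal gap --- the count closes in every case --- but they are exactly the places where a careless version of this argument produces $D$ or $D+1$ instead of $D-1$.
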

	
	\begin{lemma}\cite{castro}\label{2.2}
		Let $\chi$ be a non-trivial multiplicative character of order $r$ and $\psi$ be a non-trivial additive character of $\Fm$. Let $f$, $g$ be rational functions in $\Fm(x)$ such that $f \neq yh^r$, for any $y \in \Fm$, $h \in \Fm(x)$, and $g \neq h^p-h+y$ for any $y \in \Fm$, $h \in \Fm(x)$. Then
		
		\begin{equation*}
			\left|\sum_{x\in \Fm\setminus S}\chi(f(x))\psi(g(x))\right| \leq ((\deg g)_{\infty}+l+l'-l''-2)q^{m/2},
		\end{equation*}
		
		where $S$ is the set of poles of $f$ and $g$, $(g)_{\infty}$ is the pole divisor of $g$, $l$ is the number of distinct zeros and finite poles of $f$ in $\mathbb{F}$(algebraic closure of $\F$), $l'$ is the number of distinct poles of $g$ (including $\infty$) and $l''$ is the number of finite poles of $f$ that are poles or zeros of $g$.
	\end{lemma}

	\section{The Main Result}
	
	In this section, we present a sufficient condition for the existence of elements $\alpha\in \Fm$ such that both  $\alpha$ and $f(\alpha)$ are simultaneously primitive normal elements in $\mathbb{F}_{q^m}$ over $\F$ such that $\Tr_{\mathbb{F}_{q^m}/\mathbb{F}_{q}}(\alpha^{-1})=a$, where $f(x)\in\E_n$. 
	
	Let $e_1,e_2$ be such that $e_1, e_2\mid q^m-1$ and $g_1,g_2$ be such that $g_1,g_2\mid x^m-1$. Denote by $\mathfrak{N}_{f,a}(e_1,e_2,g_1,g_2)$, the number of $\alpha\in\mathbb{F}_{q^m}$, such that $\alpha$ is both $e_1$-free and $g_1$-free and $f(\alpha)$ is  $e_2$-free and $g_2$-free with $\Tr_{\mathbb{F}_{q^m}/\mathbb{F}_{q}}(\alpha^{-1})=a$; where $f(x) \in \E_n$. Let $P$ be the set containing $0$, and zeros and poles of $f$.
	
	\begin{theorem} \label{main_theorem}
		Let $q$ be a prime power and $m, n \in \mathbb{N}$ such that $m \geq 3$. Suppose $e_1, e_2$ divide $q^m-1$ and $g_1,g_2$ divide $x^m-1$. Then we have that 
		\begin{equation*}
			\mathfrak{N}_{f,a}(e_1,e_2,g_1,g_2) \geq \theta(e_1)\theta(e_2)\Theta(g_1)\Theta(g_2)q^{m/2}(q^{m/2}-(2n+1)W(e_1)W(e_2)W(g_1)W(g_2)).
		\end{equation*}
		In particular, we have $(q, m) \in \T$ provided that 
		\begin{equation}\label{cond3.1}
			q^{m/2-1} > (2n+1)W(q^m-1)^2W(x^m-1)^2.
		\end{equation}
	\end{theorem}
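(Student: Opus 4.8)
The plan is to express $\Nf$ as a weighted sum of multiplicative and additive characters and then to separate a principal (main) term from a collection of character sums that Lemmas~\ref{2.1} and \ref{2.2} control. First I would write
\[
\Nf = \sum_{\alpha \in \Fm \setminus P} \rho_{e_1}(\alpha)\,\rho_{e_2}(f(\alpha))\,\eta_{g_1}(\alpha)\,\eta_{g_2}(f(\alpha))\,\tau_a(\alpha^{-1}),
\]
with $P$ the set defined above, so that $\alpha^{-1}$ and $f(\alpha)$ are defined on the range of summation. Substituting the expansions \eqref{e-free ch}, \eqref{g-free ch} and \eqref{Tr ch}, factoring out the constant $\theta(e_1)\theta(e_2)\Theta(g_1)\Theta(g_2)$ together with the $1/q$ coming from $\tau_a$, and interchanging the order of summation, I obtain a sum indexed by divisors $d_1\mid e_1$, $d_2\mid e_2$, $h_1\mid g_1$, $h_2\mid g_2$, by characters $\chi_{d_1},\chi_{d_2},\psi_{h_1},\psi_{h_2}$ of the respective orders, and by $u\in\F$. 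Since $\widehat{\Fm^{*}}$ and $\widehat{\Fm}$ are cyclic, the product $\chi_{d_1}(\alpha)\chi_{d_2}(f(\alpha))$ equals $\chi(F(\alpha))$ for a single multiplicative character $\chi$ and a rational function $F(x)=x^{s_1}f(x)^{s_2}$, while $\psi_{h_1}(\alpha)\psi_{h_2}(f(\alpha))\hat{\psi_1}(u\alpha^{-1})$ equals $\psi(G(\alpha))$ for the canonical additive character $\psi$ and $G(x)=\beta_1 x+\beta_2 f(x)+u x^{-1}$. Thus every inner sum has the shape $\sum_{\alpha}\chi(F(\alpha))\psi(G(\alpha))$, exactly the object estimated by the Wan--Fu bounds.

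The principal contribution comes from the single index choice $d_1=d_2=h_1=h_2=1$ together with $u=0$: there all characters are trivial, the inner sum counts the $q^m-|P|$ admissible $\alpha$ (and $|P|\le n+1$), and this is the main term of the asserted inequality. For every other index choice at least one of $\chi,\psi$ is non-trivial, and I would bound the corresponding inner sum by a Weil estimate after checking the non-exceptionality hypotheses. Part (i) of the definition of $\E_n$ ensures that $F$ is never of the form $c\,g^{d}$ with $d$ the square-free order of $\chi$, so Lemma~\ref{2.1} applies in the purely multiplicative case (where $\psi$ is trivial, forced by $h_1=h_2=1$ and $u=0$); and when $\psi$ is non-trivial, the simple pole of $G$ at $0$ for $u\neq 0$, respectively part (ii) together with degree considerations for $u=0$, prevents $G$ from taking the excluded form $r^{q^m}-r$, so Lemma~\ref{2.2} applies. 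A degree count of $F$ and $G$ in terms of $n_1,n_2$ with $n_1+n_2=n$ then bounds each such inner sum by $(2n+1)q^{m/2}$.

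Finally I would count the error terms. The weights $\mu(d_i)/\phi(d_i)$ and $\mu^\prime(h_i)/\Phi(h_i)$ vanish off the square-free divisors, and for a square-free order the inner sum over the $\phi(d_i)$, respectively $\Phi(h_i)$, characters of that order cancels the denominator, so summing the surviving unit weights over square-free divisors contributes exactly the factor $W(e_1)W(e_2)W(g_1)W(g_2)$; carrying out the (harmless) summation over $u$ and inserting the uniform bound $(2n+1)q^{m/2}$ produces the subtracted term. Collecting the main term with this estimate and simplifying yields the displayed lower bound, and specialising to $e_1=e_2=q^m-1$ and $g_1=g_2=x^m-1$ makes its right-hand side positive precisely when $q^{m/2}>(2n+1)W(q^m-1)^2W(x^m-1)^2$, which gives $(q,m)\in\T$ and hence establishes \eqref{cond3.1}. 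I expect the principal difficulty to be the degree bookkeeping for $F$ and $G$ --- in particular tracking which irreducible factors are common to the multiplicative argument and to the denominator of the additive argument --- both to pin down the constant $2n+1$ and to confirm the hypotheses of Lemma~\ref{2.2} uniformly across the cases $u=0$ and $u\neq0$.
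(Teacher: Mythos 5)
Your proposal follows the paper's proof step for step: the same expansion of $\Nf$ via \eqref{e-free ch}, \eqref{g-free ch}, \eqref{Tr ch}, the same main-term/error-term split, the same use of Lemmas~\ref{2.1} and \ref{2.2}, and the same appeal to the two defining conditions of $\E_n$. The one place where you substitute your own reasoning for the paper's is precisely where the paper does its real work, and your substitute does not hold up. To rule out the Artin--Schreier exception $G(x)=\beta_1x+\beta_2f(x)+ux^{-1}=\R(x)^{q^m}-\R(x)$ when the additive data are nontrivial, you argue that for $u\neq 0$ the function $G$ has a simple pole at $0$. That is true only when $x\nmid f_2$, i.e.\ when $f$ has no pole at the origin; if $x\mid f_2$, the pole of $ux^{-1}$ can be absorbed or cancelled by the pole of $\beta_2 f$. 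Concretely, take $f(x)=x+x^{-1}\in\E_3$ and $\beta_2\in\F^{*}$, $\beta_1=-\beta_2$, $u=-\beta_2$: then $G\equiv 0$, which \emph{is} of the form $\R^{q^m}-\R$, and the corresponding additive characters $\psi_{h_1},\psi_{h_2}$ are genuine nontrivial characters (of $\mathbb{F}_q$-order $x-1$, so they occur in the expansion whenever $(x-1)\mid g_1,g_2$). For that term, with $d_1=d_2=1$, the inner sum degenerates to $q^m-|P|$, so no estimate of size $O(q^{m/2})$ can hold there and Lemma~\ref{2.2} is simply inapplicable. Your claimed uniform bound $(2n+1)q^{m/2}$ for every term with $\psi$ nontrivial is therefore false as stated. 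The paper, by contrast, attacks this point head-on: it assumes \eqref{zero_us}, clears denominators to obtain \eqref{cond3.4}, and uses coprimality together with part~\ref{part(ii)} of the definition of $\E_n$ to force $u_1=u_2=u=0$, after which the exceptional term is either the main term or a purely multiplicative sum handled by Lemma~\ref{2.1}. (In fairness, the paper's step ``$f_2\mid r_2^{q^m}$'' also tacitly requires $x\nmid f_2$, so the configuration that defeats your shortcut is a blind spot of the paper's argument too; but your proposal, as written, asserts the needed non-exceptionality rather than proving it, and the assertion fails exactly in this case.)

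A smaller but real bookkeeping issue: your own accounting (prefactor $\theta(e_1)\theta(e_2)\Theta(g_1)\Theta(g_2)/q$, errors of size $(2n+1)q^{m/2}$ multiplied by the $q$ values of $u$ and by the $W(e_1)W(e_2)W(g_1)W(g_2)$ surviving divisor tuples) yields positivity under $q^{m/2-1}>(2n+1)W(q^m-1)^2W(x^m-1)^2$, not under the condition $q^{m/2}>(2n+1)W(q^m-1)^2W(x^m-1)^2$ which you claim comes out ``precisely''. This factor-of-$q$ discrepancy is actually present inside the paper itself: the displayed bound and \eqref{cond3.1} in the statement of Theorem~\ref{main_theorem} do not match the final line of its own proof, and it is the weaker $q^{m/2-1}$ form that the later sections use (compare \eqref{cond4.2}). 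Your derivation supports only the $q^{m/2-1}$ version, so you should not assert the stronger one.
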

	
	\begin{proof}
		To prove this result, it is enough to show that $\Nf > 0$ for every $f \in \E_n$ and prescribed $a \in \F$. From the definition we have, 
		\begin{equation*}
			\Nf = \sum_{\alpha\in\mathbb{F}_{q^m} \setminus P}\rho_{e_1}(\alpha)\rho_{e_2}(f(\alpha))\eta_{g_1}(\alpha)\eta_{g_2}(f(\alpha))\tau_{a}(\alpha^{-1}).
		\end{equation*}
		Now using (\ref{e-free ch}), (\ref{g-free ch}) and (\ref{Tr ch}), we have 
		\begin{align}\label{cond3.2}
			\Nf =& ~\frac{\theta(e_1)\theta(e_2)\Theta(g_1)\Theta(g_2)}{q}
			\sum_{\substack{d_1\mid e_1, d_2\mid e_2 \\ h_1\mid g_1, h_2\mid g_2}} \frac{\mu(d_1)\mu(d_2)\mu'(h_1)\mu'(h_2)}{\phi(d_1)\phi(d_2)\Phi(h_1)\Phi(h_2)}
			\sum_{\substack{\chi_{d_1}, \chi_{d_2} \\ \psi_{h_1}, \psi_{h_2}}}
			\chi_{f,a}(d_1, d_2, h_1, h_2) \nonumber\\
			=& ~\vt\sum_{\substack{d_1\mid e_1,d_2\mid e_2 \\ h_1\mid g_1,h_2\mid g_2}} \frac{\mu(d_1)\mu(d_2)\mu'(h_1)\mu'(h_2)}{\phi(d_1)\phi(d_2)\Phi(h_1)\Phi(h_2)} \sum_{\substack{\chi_{d_1},\chi_{d_2} \\ \psi_{h_1},\psi_{h_2}}} \chi_{f,a}(d_1, d_2, h_1, h_2),
		\end{align}
		where $\vt = \theta(e_1)\theta(e_2)\Theta(g_1)\Theta(g_2)/{q}$ and 
		\begin{equation*}
			\C = \sum_{u\in\F} \psi_1(-au)\sum_{\alpha\in\Fm\setminus P} \chi_{d_1}(\alpha)\chi_{d_2}(f(\alpha))\psi_{h_1}(\alpha)\psi_{h_2}(f(\alpha))\hat{\psi}_1(u\alpha^{-1}).
		\end{equation*}
		Further, there exist $u_1, u_2 \in \Fm$ such that $\psi_{h_i}(\alpha)=\widehat{\psi}_1(u_i\alpha)$, for $i = 1, 2$, where $\hat{\psi}_1$ is the canonical additive character of $\Fm$. Hence 
		\begin{equation}\label{cond3.3}
			\C = \sum_{u\in\F} \psi_1(-au)\sum_{\alpha\in\Fm\setminus P} \chi_{d_1}(\alpha)\chi_{d_2}(f(\alpha))\hat{\psi}_1(u_1\alpha+u_2f(\alpha)+u\alpha^{-1}).
		\end{equation}
		Now, for $d_2=1$ we have that 
		
		\begin{equation*}
			\C = \sum_{u\in\F} \psi_1(-au)\sum_{\alpha\in\Fm\setminus P} \chi_{d_1}(\alpha)\hat{\psi}_1(u_1\alpha+u_2f(\alpha)+u\alpha^{-1}) .
		\end{equation*}
		If $u_1x+u_2f(x)+ux^{-1} \neq \R(x)^{p}-\R(x)+y$ for any $\R(x) \in \mathbb{F}(x)$ and $y \in \Fm$, then using Lemma \ref{2.2}, we get that
		\begin{align*}
			|\C| \leq& (2n+1)q^{m/2+1}\\
		\end{align*}
		Suppose 
		\begin{equation} \label{zero_us}
		u_1x+u_2f(x)+ux^{-1} = \R(x)^{p}-\R(x)+y,
		\end{equation}
		for some $\R(x) \in \mathbb{F}(x)$ and $y \in \Fm$. We will show that this is feasible only if $u_1=u_2=u=0$. Write $f={f_1}/{f_2}$ and $\R={r_1}/{r_2}$, where $f_1, f_2$ are co-prime polynomials over $\Fm$, $r_1, r_2$ are co-prime polynomials over $\mathbb{F}$ and $f_2, r_2$ are both monic. Then we have that 
		\begin{equation}\label{cond3.4}
			r_2^{q^m}(u_1x^2f_2+u_2xf_1+uf_2)=xf_2(r_1^p-r_1r_2^{p-1}+yr_2^p) .
		\end{equation}
     
	Suppose $u_2\neq 0$.
	Since $\gcd(r_1, r_2)=1$, we get that $r_2^{q^m}\mid xf_2$. 
	Further, since $\gcd(f_1,f_2)=1$, we get that $f_2\mid r_2^{q^m} \Rightarrow r_2^{q^m} = \kappa f_2$. It follows that, $r_2^{q^m} = f_2$ or $r_2^{q^m} = xf_2$. The former case implies that $f\not\in\T_n$, a contradiction, thus, 
	\begin{equation} \label{r2=xf2}
	r_2^{q^m} = xf_2.
	\end{equation}
	By combining, \eqref{cond3.4} and \eqref{r2=xf2}, we obtain
	\begin{equation} \label{cond3.4_2}
	u_1x^2f_2+u_2xf_1+uf_2=r_1^p-r_1r_2^{p-1}+yr_2^p.
	\end{equation}
	However, notice that \eqref{r2=xf2} implies that $x\mid f_2$ and $x\mid r_2$ and, plugging in the above to \eqref{cond3.4_2}, one obtains $x\mid r_1$, which contradicts  $\gcd(r_1,r_2)=1$. It follows that $u_2=0$. Now, \eqref{zero_us} becomes $u_1x+ux^{-1} = \R(x)^{p}-\R(x)+y$, which can easily lead to a contradiction if we further assume that $u_1\neq 0$ or $u\neq 0$, by comparing degrees.

	Therefore, when $u_1x+u_2f(x)+ux^{-1} = \R(x)^{p}-\R(x)+y$ for $\R(x) \in \mathbb{F}(x)$ and $y \in \Fm$, then $u_1=u_2=u=0$. This implies 
	\begin{equation*}
			|\C| \leq |P|q \leq (2n+1)q^{m/2+1}.
	\end{equation*}
	Next suppose $d_2 >1$. Let $d$ be the least common multiple of  $d_1$ and $d_2$. Then there exists a character $\chi_d$ of order $d$ such that $\chi_{d_2}=\chi_d^{d/d_2}$. Again there is an integer $0 \leq k \leq q^m-1$ such that $\chi_{d_1}=\chi_d^k$. Substituting these in (\ref{cond3.3}), we have the following expression,
	\begin{equation}\label{cond3.5}
			\C = \sum_{u\in\F} \psi_1(-au)\sum_{\alpha\in\Fm\setminus P} \chi_{d}(\alpha^kf(\alpha)^{d/d_2})\hat{\psi}_1(u_1\alpha+u_2f(\alpha)+u\alpha^{-1}) .
	\end{equation}
		
	Now let us first suppose that the inner sum is non-degenerate. Then Lemma \ref{2.2} gives
		\begin{equation*}
			|\C| \leq (2n+1)q^{m/2+1} .
		\end{equation*} 
		Again if $u_1x+u_2f(x)+ux^{-1} = \R(x)^{q^m}-\R(x)$ for some $\R(x) \in \mathbb{F}(x)$, then $u_1=u_2=u=0$. Substituting $u_1=u_2=u=0$ in (\ref{cond3.5}) we get
		\begin{equation}\label{cond3.6}
			\C = \sum_{u\in\F} \psi_0(-au)\sum_{\alpha\in\Fm\setminus P} \chi_{d}(\alpha^kf(\alpha)^{d/d_2}).
		\end{equation}
		In this case if $x^kf(x)^{d/d_2} \neq c\R(x)^d$ for any polynomial $\R(x) \in \Fm[x]$ and $c \in \Fm^*$, then Lemma \ref{2.1} implies that 
		\begin{equation*}
			|\C| \leq nq^{m/2+1} \leq (2n+1)q^{m/2+1} .
		\end{equation*}
		Next suppose that $x^kf(x)^{d/d_2} = c\R(x)^d$ for some polynomial $\R(x) \in \Fm(x)$ and $c \in \Fm^*$. Then we have that $f \notin \E_n$. 
		
		Summarizing all the above cases we observe that 
		\begin{equation*}
			|\C| \leq (2n+1)q^{m/2+1},
		\end{equation*}
		when $(d_1,d_2,h_1,h_2) \neq (1, 1, 1, 1)$. 
		
		Therefore  from (\ref{cond3.2}), we get  
		\begin{align}\label{cond3.7}
			\Nf \geq& ~\vt[q^m-(n+1)q-((2n+1)q^{m/2+1})(W(e_1)W(e_2)W(g_1)W(g_2)-1)] \nonumber \\ >& ~\vt[q^m-(2n+1)q^{m/2+1}W(e_1)W(e_2)W(g_1)W(g_2)] .
		\end{align}
		Hence $\Nf>0$ if $q^{m/2}>(2n+1)W(e_1)W(e_2)W(g_1)W(g_2)$.
		
		It is clear from the above discussion that $(q,m) \in \T$, when $e_1=e_2=q^m-1$ and $g_1=g_2=x^m-1$, that is provided 
		\[
			q^{m/2-1} > (2n+1)W(q^m-1)^2W(x^m-1)^2. \qedhere
		\]
	\end{proof}

	\section{The prime sieve technique}\label{sieve}
	
	In this section we apply the sieving inequality established by Kapetanakis in \cite{kapetanakis2014normal} to give a modified form of (\ref{cond3.7}).
	
	\begin{lemma}[Sieving Inequality] \label{sieveineq}
		
		Let $d$ be a divisor of $q^m-1$ and $p_1, p_2,\dots , p_r$ be the remaining distinct primes dividing $q^m-1$. Furthermore, let $g$ be a divisor of $x^m-1$ such that $g_1, g_2, \dots , g_s$ are the remaining distinct irreducible factors of $x^m-1$. Abbreviate  $ \mathfrak{N}_{f,a}(q^m-1, q^m-1, x^m-1, x^m-1) $ to $\mathfrak{N}_{f,a}$. Then
		\begin{multline} \label{cond4.1}
			\mathfrak{N}_{f,a} \geq \sum_{i=1}^r\mathfrak{N}_{f,a}(p_i d, d, g, g)+  \sum_{i=1}^r\mathfrak{N}_{f,a}( d,p_i d, g, g)+ \sum_{i=1}^s\mathfrak{N}_{f,a}(d, d,g_i g, g)\\ 
			+  \sum_{i=1}^s\mathfrak{N}_{f,a}(d, d, g,g_i g) -(2r+2s-1)\mathfrak{N}_{f,a}(d, d, g, g).
		\end{multline}
	\end{lemma}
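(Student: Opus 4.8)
This is a standard inclusion–exclusion / character-sieve argument, so let me think about how I would actually prove it.

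The quantity $\mathfrak{N}_{f,a}(e_1,e_2,g_1,g_2)$ counts $\alpha$ that are simultaneously $e_1$-free, $g_1$-free (for $\alpha$) and $e_2$-free, $g_2$-free (for $f(\alpha)$), with the trace condition. The goal is to bound $\mathfrak{N}_{f,a} = \mathfrak{N}_{f,a}(q^m-1, q^m-1, x^m-1, x^m-1)$ from below in terms of quantities where we've only imposed freeness with respect to $d$ (plus one extra prime) rather than the full $q^m-1$.

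The key observation is the following "one-prime" decomposition. Fix a prime $p_i$ dividing $q^m-1$ but not $d$. Being $(p_i d)$-free is the same as being $d$-free AND $p_i$-free. So I want a relation that lets me exchange the condition "$e_1$-free for the full radical" with "$d$-free plus $p_i$-free for each remaining $p_i$."

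The sieve is built on the elementary set-theoretic fact that if a property $\mathcal{P}$ requires freeness with respect to primes $p_1,\dots,p_r$ simultaneously, then the count of elements satisfying all of them is at least $\sum_i (\text{count with } p_i) - (r-1)(\text{count with none})$. Concretely, let $A$ be the set of $\alpha$ that are $(q^m-1)$-free etc. (the full condition), and write the full radical freeness as the intersection over all remaining primes of the single-prime freeness conditions. Each single-prime freeness, imposed on top of the baseline $d$ (or $g$), is exactly one of the terms $\mathfrak{N}_{f,a}(p_i d, d, g, g)$, etc. There are $2r + 2s$ such "coordinate directions" in total (two multiplicative freeness slots contributing $r$ primes each, two additive freeness slots contributing $s$ irreducible factors each).

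So the plan is:

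\begin{enumerate}[label=(\roman*)]
	\item Observe that $\alpha$ is $(q^m-1)$-free iff it is $d$-free and $p_i$-free for every $i=1,\dots,r$; similarly for the $f(\alpha)$ slot and for the two additive ($g$-freeness) slots. This reduces the full count to an intersection of $2r+2s$ single-step freeness conditions, all sitting over the baseline $\mathfrak{N}_{f,a}(d,d,g,g)$.
	\item Apply the elementary inclusion–exclusion lower bound $|\bigcap_{j=1}^{N} B_j| \geq \sum_{j=1}^{N}|B_j| - (N-1)|U|$, where $U$ is the baseline (ambient) set and each $B_j$ is the subset satisfying one additional freeness condition, with $N = 2r+2s$.
	\item Identify each $|B_j|$ with the corresponding $\mathfrak{N}_{f,a}(\cdot)$ term and $|U|$ with $\mathfrak{N}_{f,a}(d,d,g,g)$, yielding exactly (\ref{cond4.1}).
\end{enumerate}

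\textbf{The main obstacle.} The one genuinely delicate point is justifying step (i): I must check that imposing $p_i$-freeness on $\alpha$ on top of $d$-freeness is literally the same as $(p_i d)$-freeness, which needs $\gcd(p_i, d) = 1$ (guaranteed, since the $p_i$ are the \emph{remaining} primes) and the multiplicative structure of $e$-freeness; the analogous statement for $g$-freeness uses that the $g_i$ are the remaining \emph{distinct irreducible} factors of $x^m-1$, so $\gcd(g_i, g) = 1$. Once coprimality is in hand, the freeness conditions for distinct primes/factors are independent constraints and the intersection decomposition is clean. The inclusion–exclusion bound in step (ii) is routine (it follows from $|\bigcap B_j| = |U| - |\bigcup B_j^c| \geq |U| - \sum |B_j^c| = |U| - \sum(|U|-|B_j|)$), and the bookkeeping in step (iii) is purely notational. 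I would present (i) carefully and treat (ii)–(iii) briskly.
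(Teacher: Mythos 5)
Your proof is correct. Note that the paper itself contains no proof of Lemma \ref{sieveineq}: it is quoted directly from Kapetanakis \cite{kapetanakis2014normal}, so there is no in-paper argument to compare against. Your inclusion--exclusion argument --- the baseline set $U$ with $|U| = \mathfrak{N}_{f,a}(d,d,g,g)$, the $2r+2s$ subsets $B_j$ each imposing one additional prime $p_i$ or irreducible factor $g_i$ of freeness, the identification of $\bigcap_j B_j$ with $\mathfrak{N}_{f,a}$ via the fact that freeness depends only on the set of prime (resp.\ monic irreducible) divisors of the modulus, and the Bonferroni-type bound $\bigl|\bigcap_j B_j\bigr| \geq \sum_j |B_j| - (2r+2s-1)|U|$ --- is precisely the standard proof of such sieving inequalities and is essentially the argument of the cited source, with the one delicate point (that $(p_i d)$-freeness equals $d$-freeness plus $p_i$-freeness, and its additive analogue) correctly identified and correctly resolved.
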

	
	An upper bound for some differences that will be needed later is provided by the following result.
	
	\begin{lemma}\label{diffub}
		Let $d, m, q \in \mathbb{N}$, $g \in \F[x]$ be such that q is a prime power, $m \geq 3$, $d \mid  q^m-1$ and $g \mid  x^m-1$. Let $e$ be a prime number which divides $q^m-1$ but not $d$, and $h$ be an irreducible polynomial dividing $x^m-1$ but not $g$. Then we have the following bounds:
		\begin{align*}
		|\N_{f,a}(ed, d, g, g)-\theta(e)\N_{f,a}(d, d, g, g)| & \leq (2n+1)\theta(e)\theta(d)^2\Theta(g)^2W(d)^2W(g)^2q^{m/2}, \\
		|\N_{f,a}(d, ed, g, g)-\theta(e)\N_{f,a}(d, d, g, g)| & \leq (2n+1)\theta(e)\theta(d)^2\Theta(g)^2W(d)^2W(g)^2q^{m/2},\\
		|\N_{f,a}(d, d, hg, g)-\Theta(h)\N_{f,a}(d, d, g, g)| & \leq (2n+1)\Theta(h)\theta(d)^2\Theta(g)^2W(d)^2W(g)^2q^{m/2},\\
		|\N_{f,a}(d, d, g, hg)-\Theta(h)\N_{f,a}(d, d, g, g)| & \leq (2n+1)\Theta(h)\theta(d)^2\Theta(g)^2W(d)^2W(g)^2q^{m/2} .
		\end{align*}
	\end{lemma}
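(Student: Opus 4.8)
The plan is to expand both $\N_{f,a}(ed,d,g,g)$ and $\theta(e)\N_{f,a}(d,d,g,g)$ using the character-sum formula \eqref{cond3.2} obtained in the proof of Theorem~\ref{main_theorem}, and then to exploit the hypothesis that $e$ is a prime not dividing $d$. For $\N_{f,a}(ed,d,g,g)$ the outer constant is $\theta(ed)\theta(d)\Theta(g)^2/q=\theta(e)\theta(d)^2\Theta(g)^2/q$, since $\gcd(e,d)=1$ and $\theta$ is multiplicative on coprime arguments. Because $e$ is prime and $e\nmid d$, every divisor $d_1$ of $ed$ is either a divisor of $d$ or of the form $ed_1'$ with $d_1'\mid d$, so I would split the inner sum over $d_1\mid ed$ into these two families.

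The contribution of the divisors $d_1\mid d$ reproduces, after factoring out the common $\theta(e)$, exactly the expansion of $\theta(e)\N_{f,a}(d,d,g,g)$. Consequently the difference $\N_{f,a}(ed,d,g,g)-\theta(e)\N_{f,a}(d,d,g,g)$ equals
\[
\frac{\theta(e)\theta(d)^2\Theta(g)^2}{q}\sum_{\substack{d_1'\mid d,\,d_2\mid d\\ h_1\mid g,\,h_2\mid g}}\frac{\mu(ed_1')\mu(d_2)\mu'(h_1)\mu'(h_2)}{\phi(ed_1')\phi(d_2)\Phi(h_1)\Phi(h_2)}\sum_{\substack{\chi_{ed_1'},\chi_{d_2}\\ \psi_{h_1},\psi_{h_2}}}\chi_{f,a}(ed_1',d_2,h_1,h_2),
\]
which is exactly the part of the full expansion coming from multiplicative characters whose order is divisible by $e$.

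The key observation, and the reason this error term is small, is that in every surviving summand the multiplicative character $\chi_{ed_1'}$ has order $ed_1'\geq e>1$, so the index tuple $(ed_1',d_2,h_1,h_2)$ is never $(1,1,1,1)$. Hence the uniform estimate $|\chi_{f,a}(\cdots)|\leq(2n+1)q^{m/2+1}$ established in the proof of Theorem~\ref{main_theorem} applies to each inner character sum. For a fixed tuple $(d_1',d_2,h_1,h_2)$ the number of admissible character choices is $\phi(ed_1')\phi(d_2)\Phi(h_1)\Phi(h_2)$, which exactly cancels the denominator of the M\"obius weight, leaving each term bounded by $(2n+1)q^{m/2+1}$. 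Since $|\mu|\leq 1$ and $|\mu'|\leq 1$, only square-free $d_1',d_2\mid d$ and $h_1,h_2\mid g$ contribute, giving $W(d)^2W(g)^2$ nonzero tuples.

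Combining these estimates bounds the difference by $\frac{\theta(e)\theta(d)^2\Theta(g)^2}{q}\cdot W(d)^2W(g)^2\cdot(2n+1)q^{m/2+1}=(2n+1)\theta(e)\theta(d)^2\Theta(g)^2W(d)^2W(g)^2q^{m/2}$, which is the first claimed inequality. The second bound follows identically by splitting the sum over $d_2\mid ed$, and for the third and fourth bounds I would run the same argument with the irreducible polynomial $h$ in place of the prime $e$: since $h\nmid g$, divisors of $hg$ split as divisors of $g$ together with $h$ times divisors of $g$, $\Theta(hg)=\Theta(h)\Theta(g)$, and in the error term the additive character $\psi_{hh_1'}$ has nontrivial $\F_q$-order, so again the tuple is forced away from $(1,1,1,1)$ and the uniform estimate applies. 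I do not expect a genuine obstacle here; the only point requiring care is verifying that the relevant character index is forced to be nontrivial throughout the error term, since this is precisely what makes the estimate from Theorem~\ref{main_theorem} available and hence renders the difference of order $q^{m/2}$ rather than $q^m$.
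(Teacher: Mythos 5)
Your proposal is correct and follows essentially the same route as the paper: both expand the difference via the character-sum formula \eqref{cond3.2}, identify the difference with the sum over divisors $d_1$ of $ed$ divisible by $e$ (so the index tuple is never $(1,1,1,1)$ and the uniform bound $|\chi_{f,a}(d_1,d_2,h_1,h_2)|\leq(2n+1)q^{m/2+1}$ applies), and count the $W(d)^2W(g)^2$ square-free tuples after the $\phi$ and $\Phi$ factors cancel. The paper records this count as $W(d)\left(W(ed)-W(d)\right)W(g)^2$ with $W(ed)=2W(d)$, which is exactly your enumeration of divisors of the form $ed_1'$ with $d_1'\mid d$ square-free.
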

	
	\begin{proof}
		From the definition, we have 
		\begin{multline*}
			\N_{f,a}(ed, d, g, g)-\theta(e)\N_{f,a}(d, d, g, g) \\= \vt
			\sum_{\substack{e\mid d_1\mid ed ,\ d_2\mid d \\ g_1\mid g , \ g_2\mid g}}
			\frac{\mu(d_1)\mu(d_2)\mu'(g_1)\mu'(g_2)}{\phi(d_1)\phi(d_2)\Phi(g_1)\Phi(g_2)}\sum_{\substack{\chi_{g_1},\chi_{g_2} \\ \psi_{h_1},\psi_{h_2}}} \chi_{f,a}(d_1, d_2, g_1, g_2) .
		\end{multline*}
		Now, using $|\chi_{f,a}(d_1, d_2, g_1, g_2)| \leq (2n+1)q^{m/2+1}$, we get 
		\begin{multline*}
			|\N_{f,a}(ed, d, g, g)-\theta(e)\N_{f,a}(d, d, g, g)| \\ \leq \frac{\theta(e)\theta(d)^2\Theta(g)^2}{q}(2n+1)q^{m/2+1}W(d)\left(W(ed)-W(d)\right)W(g)^2 .
		\end{multline*}
		Since $W(ed)=W(e)W(d)=2W(d)$, we have 
		\begin{equation*}
			|\N_{f,a}(ed, d, g, g)-\theta(e)\N_{f,a}(d, d, g, g)| \leq \frac{\theta(e)\theta(d)^2\Theta(g)^2}{q}(2n+1)q^{m/2+1}W(d)^2W(g)^2 .
		\end{equation*}
		We obtain the remaining bounds in a similar fashion.
	\end{proof}
	
	\begin{theorem}\label{thsieve}
		Let $d, m, q \in \mathbb{N}$, $g \in \F[x]$ be such that q is a prime power, $m \geq 3$, $d \mid  q^m-1$ and $g \mid  x^m-1$. Let $d$ be a divisor of $q^m-1$ and $p_1, p_2,\ldots , p_r$ be the remaining distinct primes dividing $q^m-1$. Furthermore, let $g$ be a divisor of $x^m-1$ such that $g_1, g_2, \dots , g_s$ are the remaining distinct irreducible factors of $x^m-1$. Define 
		\[ \lambda := 1 - 2  \sum_{i=1}^r\frac{1}{p_i} - 2  \sum_{i=1}^s \frac{1}{q^{{\mathrm deg}(g_i)}}, \  \lambda>0 \] and \[ \Lambda  := \frac{2r+2s-1}{\lambda}+2. \]
		Then $\mathfrak{N}_{f,a}>0$ if 
	  \begin{equation}\label{cond4.2}
			q^{m/2-1}>(2n+1)W(l)^2W(g)^2\Lambda.
		\end{equation}
	\end{theorem}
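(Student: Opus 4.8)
The plan is to substitute the difference estimates of Lemma~\ref{diffub} into the sieving inequality~\eqref{cond4.1} of Lemma~\ref{sieveineq}, thereby expressing a lower bound for $\N_{f,a}$ entirely in terms of the single quantity $N:=\N_{f,a}(d,d,g,g)$, and then to bound $N$ from below by the estimate already obtained in the proof of Theorem~\ref{main_theorem}. First I would abbreviate $B:=(2n+1)\theta(d)^2\Theta(g)^2W(d)^2W(g)^2q^{m/2}$, so that the four families of inequalities in Lemma~\ref{diffub} take the uniform shape
\[
\N_{f,a}(p_id,d,g,g)\ge\theta(p_i)(N-B),\qquad \N_{f,a}(d,p_id,g,g)\ge\theta(p_i)(N-B),
\]
\[
\N_{f,a}(d,d,g_ig,g)\ge\Theta(g_i)(N-B),\qquad \N_{f,a}(d,d,g,g_ig)\ge\Theta(g_i)(N-B).
\]
Each of these holds irrespective of the sign of $N-B$, since Lemma~\ref{diffub} is a two-sided (absolute-value) bound. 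Writing $\Sigma:=2\sum_{i=1}^r\theta(p_i)+2\sum_{i=1}^s\Theta(g_i)$ and inserting these into~\eqref{cond4.1} gives
\[
\N_{f,a}\ge \Sigma(N-B)-(2r+2s-1)N=\bigl(\Sigma-(2r+2s-1)\bigr)N-\Sigma B.
\]

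The decisive algebraic step is the evaluation of $\Sigma$. Using $\theta(p_i)=1-p_i^{-1}$ and $\Theta(g_i)=1-q^{-\deg(g_i)}$, I obtain
\[
\Sigma=2(r+s)-\Bigl(2\sum_{i=1}^r p_i^{-1}+2\sum_{i=1}^s q^{-\deg(g_i)}\Bigr)=2(r+s)-1+\lambda,
\]
so that the coefficient of $N$ collapses exactly to $\Sigma-(2r+2s-1)=\lambda$. Hence $\N_{f,a}\ge \lambda N-\Sigma B$, which is the heart of the matter: the sieving parameter $\lambda$ emerges precisely as the surviving coefficient after the $(2r+2s-1)$ subtraction cancels the bulk of $\Sigma$.

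Finally I would feed in the lower bound established in the proof of Theorem~\ref{main_theorem} (see~\eqref{cond3.7}), specialised to $e_1=e_2=d$ and $g_1=g_2=g$, namely
\[
N\ge \theta(d)^2\Theta(g)^2q^{m/2}\bigl(q^{m/2-1}-(2n+1)W(d)^2W(g)^2\bigr),
\]
and then factor the positive quantity $\theta(d)^2\Theta(g)^2q^{m/2}$ out of both $\lambda N$ and $\Sigma B$. Since $\lambda>0$, positivity of $\N_{f,a}$ reduces to
\[
\lambda q^{m/2-1}>(2n+1)W(d)^2W(g)^2(\lambda+\Sigma).
\]
Because $\lambda+\Sigma=2\lambda+2(r+s)-1=\lambda\bigl(\tfrac{2r+2s-1}{\lambda}+2\bigr)=\lambda\Lambda$, dividing through by $\lambda$ yields exactly the stated condition~\eqref{cond4.2}, and the proof is complete.

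The bookkeeping above is routine, and the only place I would proceed with real care is the tracking of the power of $q$ through the chain of estimates. The appearance of $q^{m/2}$ (rather than $q^{m/2+1}$) in Lemma~\ref{diffub}, matched against the factor $q^{m/2-1}$ sitting inside the Theorem~\ref{main_theorem} bound, is precisely what produces the exponent $m/2-1$ on the left of~\eqref{cond4.2}; a single slip in this exponent would shift the final threshold by a multiplicative factor of $q$ and invalidate the conclusion.
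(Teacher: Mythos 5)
Your proposal is correct and takes essentially the same route as the paper: both substitute the two-sided bounds of Lemma~\ref{diffub} into the sieving inequality of Lemma~\ref{sieveineq}, lower-bound $\N_{f,a}(d,d,g,g)$ via \eqref{cond3.7}, and use the identity $2\sum_{i=1}^r\theta(p_i)+2\sum_{i=1}^s\Theta(g_i)=2r+2s-1+\lambda$ to make $\Lambda$ appear. Your bookkeeping is in fact slightly cleaner than the paper's, which retains the harmless $-(n+1)$ and $\left(W(d)^2W(g)^2-1\right)$ terms longer and contains a sign typo in its stated identity for $\lambda$, but the argument is the same.
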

	
	\begin{proof}
		Using Lemma \ref{sieveineq} and Lemma \ref{diffub}, we get that
		\begin{align*}
			\mathfrak{N}_{f,a} \geq& 
			\sum_{i=1}^{r}\left\{\N_{f,a}(p_id, d, g, g)-\theta(p_i)\N_{f,a}(d, d, g, g)\right\}  + \\ & 
			\sum_{i=1}^{r}\left\{\N_{f,a}(d, p_id, g, g)-\theta(p_i)\N_{f,a}(d, d, g, g)\right\}  + \\ &
			\sum_{i=1}^{s}\left\{\N_{f,a}(d, d, g_ig, g)-\theta(g_i)\N_{f,a}(d, d, g, g)\right\}  + \\ &
			\sum_{i=1}^{s}\left\{\N_{f,a}(d, d, g, g_ig)-\theta(g_i)\N_{f,a}(d, d, g, g)\right\}  + 
			\lambda\N_{f,a}(d, d, g, g) .
		\end{align*}
		
		Now using Lemma \ref{diffub}, we have the following expression
		\begin{multline*}
			\mathfrak{N}_{f,a} \geq
			\theta(d)^2\Theta(g)^2\Bigg[\left(2  \sum_{i=1}^{r}\theta(p_i)+2  \sum_{i=1}^{s}\Theta(g_i)\right)\left(-(2n+1)W(d)^2W(g)^2q^{m/2}\right) \\ + \lambda\left\{q^{m-1}-(n+1)-(2n+1)q^{m/2}(W(d)^2W(g)^2-1)\right\}\Bigg] ,
		\end{multline*}
		thus,
		\begin{align*}
			\mathfrak{N}_{f,a} \geq&
			\theta(d)^2\Theta(g)^2\lambda\Bigg[ \left(\frac{2  \sum\limits_{i=1}^{r}\theta(p_i)+2  \sum\limits_{i=1}^{s}\Theta(g_i)}{\lambda}+1\right)\left(-(2n+1)W(d)^2W(g)^2q^{m/2}\right) \nonumber\\ +& \left\{q^{m-1}-(n+1)-(2n+1)q^{m/2}\right\}\Bigg]
		\end{align*}

		We note that $\lambda = 2\sum\limits_{i=1}^{r}\theta(p_i)+2  \sum\limits_{i=1}^{s}\Theta(g_i)-(2r+2s+1)$. Then the above becomes
		\begin{align*}
			\mathfrak{N}_{f,a} \geq&
			~\theta(d)^2\Theta(g)^2\lambda \left[\left(-(2n+1)W(d)^2W(g)^2q^{m/2}\Lambda\right) + \left\{q^{m-1}-(n+1)-(2n+1)q^{m/2}\right\}\right]  , 
		\end{align*}
		where $\Lambda = \frac{2r+2s-1}{\lambda}+2$.
		
		Therefore when $\lambda>0$, \eqref{cond4.2} implies $\mathfrak{N}_{f,a}>0$.
	\end{proof}
		The technique of applying the Prime Sieve criterion given by Theorem \ref{thsieve} is to include all the small primes in $q^m-1$ in $d$ keeping as many as possible in $p_1, p_2, \dots, p_r$ so long as $\lambda$ is positive.
	
	\section{The Modified Prime Sieve Technique}\label{modified_sieve}
	
	In this section we present a modified sieving criterion which is more effective than Theorem~\ref{thsieve}, that was first introduced in \cite{cohen2021primitive}. To proceed further we need the following notations and conventions which we adopt from \cite{cohen2021primitive}. Let $\Rad(q^m-1)=kPL$, where $k$ is the product of the small prime divisors of $q^m-1$, $L$ is the product of the large prime divisors of $q^m-1$ denoted by $L=l_1.l_2\ldots l_t$, and the rest of the prime divisors of $q^m-1$ are in $P$ and denoted by $p_1, p_2,\dots , p_r$. Similarly we have $\Rad(x^m-1)=gGH$, where $g$ is the product of irreducible factors of $x^m-1$ of least degree. Irreducible factors of large degree are factors of $H$ denoted by $h_1,h_2,\ldots,h_u$ and the rest of the factors of $x^m-1$ lie in $G$ and are denoted by $g_1, g_2, \dots , g_s$.
	
	\begin{theorem}\label{modps}
		Let $m,q \in \mathbb{N}$ such that $q$ is a prime power and $m \geq 3$. With notations as above, let $\Rad(q^m-1)=kPL$, $\Rad(x^m-1)=gGH$. Define
		\begin{equation*}
			\lambda := 1 - 2  \sum_{i=1}^r\frac{1}{p_i} - 2  \sum_{i=1}^s \frac{1}{q^{{\mathrm deg}(g_i)}}, \lambda>0,  \epsilon_1:=\sum_{i=1}^t\frac{1}{l_i}, \epsilon_2:=\sum_{i=1}^u\frac{1}{q^{{\mathrm deg}(h_i)}}
		\end{equation*} and let
		$$\lambda\theta(k)^2\Theta(g)^2-(2\epsilon_1+2\epsilon_2)>0.$$ Then  
		\begin{multline}\label{cond5.1}
			q^{\frac m2-1}  > (2n+1)\bigg[\theta(k)^2\Theta(g)^2W(k)^2W(g)^2(2r+2s-1+2\lambda) \\ +\frac{(n+2)(t-\epsilon_1)+(n+3)(u-\epsilon_2)}{(2n+1)} - \frac{n}{(2n+1)q^{m/2}}(t+u-(2\epsilon_1+2\epsilon_2))\bigg] \\ /\bigg[\lambda\theta(k)^2\Theta(g)^2-(2\epsilon_1+2\epsilon_2)\bigg] .
		\end{multline}
		
	\end{theorem}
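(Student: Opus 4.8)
The plan is to refine the prime sieve argument of Theorem~\ref{thsieve} by applying the sieving inequality of Lemma~\ref{sieveineq} in a more economical way, separating the \emph{large} prime divisors (collected in $L$ and $H$) from the \emph{remaining} ones (collected in $P$ and $G$). First I would set $d=k$ and take the characteristic function $g$ in Lemma~\ref{sieveineq} to be the product of the irreducible factors in the small-degree part, so that the sieving primes $p_1,\dots,p_r$ and $g_1,\dots,g_s$ are exactly those in $P$ and $G$. Applying Lemma~\ref{sieveineq} with this choice, together with the difference bounds of Lemma~\ref{diffub}, reproduces the first part of the argument of Theorem~\ref{thsieve} and yields a lower bound for $\mathfrak{N}_{f,a}(k,k,g,g)$ involving $\lambda$, the sum $2\sum\theta(p_i)+2\sum\Theta(g_i)$, and the quantity $\theta(k)^2\Theta(g)^2$.

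The key new ingredient is to handle the large primes $l_1,\dots,l_t$ and large-degree factors $h_1,\dots,h_u$ separately rather than absorbing them into $d$. For these I would invoke the difference bounds of Lemma~\ref{diffub} once more, now comparing $\mathfrak{N}_{f,a}(k,k,g,g)$ (or the intermediate quantities) against the full $\mathfrak{N}_{f,a}=\mathfrak{N}_{f,a}(q^m-1,q^m-1,x^m-1,x^m-1)$. The point is that each $\theta(l_i)=1-1/l_i$ is close to $1$ since the $l_i$ are large, so the correction terms are governed by $\epsilon_1=\sum 1/l_i$ and $\epsilon_2=\sum 1/q^{\deg(h_i)}$; similarly the product $\prod_i\theta(l_i)\prod_j\Theta(h_j)$ can be bounded below by $1-(2\epsilon_1+2\epsilon_2)$ type estimates. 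Collecting all contributions, the coefficient of $q^{m-1}$ becomes the factor $\lambda\theta(k)^2\Theta(g)^2-(2\epsilon_1+2\epsilon_2)$ appearing in the denominator of \eqref{cond5.1}, while the error terms assemble into the bracketed numerator, with the $W(k)^2W(g)^2$ factors coming from the square-free divisor counts in Lemma~\ref{diffub} and the terms $(n+2)(t-\epsilon_1)+(n+3)(u-\epsilon_2)$ tracking the $|P|=n+1$, $\infty$, and degree contributions for each large prime or factor peeled off.

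Rearranging the resulting inequality so that $\mathfrak{N}_{f,a}>0$ holds precisely when the coefficient of $q^{m-1}$ dominates the error, and dividing through by that coefficient (which is positive by the hypothesis $\lambda\theta(k)^2\Theta(g)^2-(2\epsilon_1+2\epsilon_2)>0$), gives exactly condition \eqref{cond5.1}. I expect the main obstacle to be the bookkeeping in the second step: carefully accounting for how each large prime $l_i$ and each large-degree factor $h_j$ contributes simultaneously to the error numerator and to the reduction of the leading coefficient, and verifying that the combinatorial constants $(n+2)$, $(n+3)$, and the $n/(2n+1)q^{m/2}$ correction emerge correctly from the bounds $|\chi_{f,a}(d_1,d_2,h_1,h_2)|\le(2n+1)q^{m/2+1}$ and the set-size term $|P|\le n+1$. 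The positivity hypothesis on $\lambda\theta(k)^2\Theta(g)^2-(2\epsilon_1+2\epsilon_2)$ is what guarantees the division preserves the inequality direction, so ensuring this factor is isolated correctly is the crux of the estimate.
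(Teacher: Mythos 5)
Your high-level architecture agrees with the paper's: sieve the small parts $kP$ and $gG$ exactly as in Theorem \ref{thsieve}, and treat the large primes $l_1,\dots,l_t$ and large-degree factors $h_1,\dots,h_u$ separately. However, there is a genuine gap at the step that carries all the content of the modified sieve: you handle the large part by ``invoking the difference bounds of Lemma \ref{diffub} once more'' and expect the constants $(n+2)$ and $(n+3)$ to emerge from the generic estimate $|\chi_{f,a}(d_1,d_2,h_1,h_2)|\leq (2n+1)q^{m/2+1}$ together with $|P|\leq n+1$. This cannot work. Each large prime $l_i$ produces two difference terms (one for each multiplicative slot), so the generic bound yields a combined coefficient $2(2n+1)=4n+2$ per $l_i$, not $n+2$; similarly $4n+2$ instead of $n+3$ per $h_j$. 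Worse, if the large primes are peeled off the full modulus (which is what ``comparing against the full $\mathfrak{N}_{f,a}$'' via Lemma \ref{diffub} amounts to), every such term also acquires the factor $W(k)^2W(g)^2$, which is precisely what the modified sieve is designed to keep away from the $t+u$ large-prime terms. Either way, the inequality you would obtain is strictly weaker than \eqref{cond5.1} and does not prove the theorem as stated.

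The paper's proof supplies two ingredients your proposal lacks. First, the explicit inclusion--exclusion decomposition $\mathfrak{N}_{f,a}\geq \mathfrak{N}_{f,a}(kP,kP,gG,gG)+\mathfrak{N}_{f,a}(L,L,H,H)-\mathfrak{N}_{f,a}(1,1,1,1)$, after which Lemma \ref{sieveineq} is applied to $\mathfrak{N}_{f,a}(L,L,H,H)$ so that each large-prime difference is taken against $\mathfrak{N}_{f,a}(1,1,1,1)$, i.e.\ with all other characters trivial. Second, for exactly these differences the paper proves new, sharper character-sum estimates directly from Lemmas \ref{2.1} and \ref{2.2}: $|\chi_{f,a}(l,1,1,1)|\leq q^{m/2+1}+nq$, $|\chi_{f,a}(1,l,1,1)|\leq (n+1)q^{m/2+1}$, $|\chi_{f,a}(1,1,h,1)|\leq 2q^{m/2+1}+nq$ and $|\chi_{f,a}(1,1,1,h)|\leq (n+1)q^{m/2+1}$. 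These give $(q^{m/2}+n)+(n+1)q^{m/2}=(n+2)q^{m/2}+n$ per $l_i$ and $(2q^{m/2}+n)+(n+1)q^{m/2}=(n+3)q^{m/2}+n$ per $h_j$, which is the actual source of the numerator terms $(n+2)(t-\epsilon_1)+(n+3)(u-\epsilon_2)$ and of the $n/\bigl((2n+1)q^{m/2}\bigr)$ correction. Finally, the correction $-(2\epsilon_1+2\epsilon_2)$ to the leading coefficient comes from the crude bound $\mathfrak{N}_{f,a}(1,1,1,1)\leq q^{m-1}$ applied to the term $-(2\epsilon_1+2\epsilon_2)\mathfrak{N}_{f,a}(1,1,1,1)$, not from a lower bound on the product $\prod_i\theta(l_i)\prod_j\Theta(h_j)$, which plays no role in the argument.
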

	
	\begin{proof}
		It is clear that 
		\begin{multline}\label{cond5.2}
			\mathfrak{N}_{f,a}(q^m-1, q^m-1, x^m-1, x^m-1)= \mathfrak{N}_{f,a}(kPL, kPL, gGH, gGH)  \\ \geq \mathfrak{N}_{f,a}(kP, kP, gG, gG) + \mathfrak{N}_{f,a}(L, L, H, H)- \mathfrak{N}_{f,a}(1, 1, 1, 1).
		\end{multline}
		Now using Lemma \ref{sieveineq} we have,
		\begin{multline*}
			\mathfrak{N}_{f, a}(kP, kP, gG, gG) \geq \sum_{i=1}^r\left\{\N_{f,a}(kp_i, k, g, g)-\theta(p_i)\N_{f,a}(k, k, g, g)\right\} \\ + 
			\sum_{i=1}^r\left\{\N_{f,a}(k, kp_i, g, g)-\theta(p_i)\N_{f,a}(k, k, g, g)\right\} \\ +
			\sum_{i=1}^s\left\{\N_{f,a}(k, k, gg_i, g)-\theta(g_i)\N_{f,a}(k, k, g, g)\right\} \\ +
			\sum_{i=1}^s\left\{\N_{f,a}(k, k, g, gg_i)-\theta(g_i)\N_{f,a}(k, k, g, g)\right\} + \lambda\N_{f,a}(k, k, g, g).
		\end{multline*}
		Further, by Lemma \ref{diffub} and (\ref{cond3.7}) 
		\begin{align}\label{cond5.3}
			\mathfrak{N}_{f, a}(kP, kP, gG, gG) \geq& ~\lambda\theta(k)^2\Theta(g)^2\left[q^{m-1}-(2n+1)q^{m/2}W(k)^2W(g)^2\right] \nonumber  \\ & - (2n+1)\theta(k)^2\Theta(g)^2W(k)^2W(g)^2q^{m/2}\left(\sum_{i=1}^r\theta(p_i)+\sum_{i=1}^s\theta(g_i)\right) \nonumber \\
			\geq& ~\theta(k)^2\Theta(g)^2\left[\lambda q^{m-1}-q^{m/2}(2n+1)W(k)^2W(g)^2\left(2r+2s-1+2\lambda\right)\right]
		\end{align}
		Now using Lemma \ref{sieveineq}, we arrive at 
		\begin{multline}
			\mathfrak{N}_{f, a}(L, L, H, H)-\mathfrak{N}_{f, a}(1, 1, 1, 1) \geq  \sum_{i=1}^t\mathfrak{N}_{f, a}(l_i, 1, 1, 1) + \sum_{i=1}^t\mathfrak{N}_{f, a}(1, l_i, 1, 1)+ \nonumber \\   \sum_{i=1}^u\mathfrak{N}_{f, a}(1, 1, h_i, 1)+\sum_{i=1}^u\mathfrak{N}_{f, a}(1, 1, 1, h_i)- (2t+2u)\mathfrak{N}_{f, a}(1, 1, 1, 1) \nonumber \\ 
		\end{multline}
		
		Thus 
		\begin{multline}\label{cond5.4}
			\mathfrak{N}_{f, a}(L, L, H, H)-\mathfrak{N}_{f, a}(1, 1, 1, 1) \geq \sum_{i=1}^t\left\{\mathfrak{N}_{f, a}(l_i, 1, 1, 1)-\theta(l_i)\mathfrak{N}_{f, a}(1, 1, 1, 1)\right\} \\ + \sum_{i=1}^t\left\{\mathfrak{N}_{f, a}(1, l_i, 1, 1)-\theta(l_i)\mathfrak{N}_{f, a}(1, 1, 1, 1)\right\} 
			+  \sum_{i=1}^u\left\{\mathfrak{N}_{f, a}(1, 1, h_i, 1)-\Theta(h_i)\mathfrak{N}_{f, a}(1, 1, 1, 1)\right\} \\ +  \sum_{i=1}^u\left\{\mathfrak{N}_{f, a}(1, 1, 1, h_i) - \Theta(h_i)\mathfrak{N}_{f, a}(1, 1, 1, 1)\right\}  - (2t+2u)\mathfrak{N}_{f, a}(1, 1, 1, 1).
		\end{multline}
		
		Let $l$ be a prime divisor of $q^m-1$. By the definition of $\Nf$ we have the following 
		\begin{equation*}
			|\mathfrak{N}_{f, a}(l, 1, 1, 1)-\theta(l)\mathfrak{N}_{f, a}(1, 1, 1, 1)| = \frac{\theta(l)}{q}\left|\sum_{\psi_\ell} \chi_{f,a}(l, 1, 1, 1)\right| , 
		\end{equation*}
		where 
		\begin{equation*}
			\left|\chi_{f,a}(l, 1, 1, 1)\right|=\left|\sum_{u\in\F} \psi_1(-au)\sum_{\alpha\in\Fm\setminus P} \chi_{l}(\alpha)\hat{\psi}_1(u\alpha^{-1})\right| \leq q^{m/2+1}+nq .
		\end{equation*}
		
		Therefore $|\mathfrak{N}_{f, a}(l, 1, 1, 1)-\theta(l)\mathfrak{N}_{f, a}(1, 1, 1, 1)|\leq \theta(l)(q^{m/2}+n)$. Similarly we get, 
		\begin{equation*}
			\left|\chi_{f,a}(1, l, 1, 1)\right|=\left|\sum_{u\in\F} \psi_1(-au)\sum_{\alpha\in\Fm\setminus P} \chi_{l}(f(\alpha))\hat{\psi}_1(u\alpha^{-1})\right| \leq (n+	1)q^{m/2+1}
		\end{equation*}
		
		This implies that $|\mathfrak{N}_{f, a}(1, l, 1, 1)-\theta(l)\mathfrak{N}_{f, a}(1, 1, 1, 1)|\leq \theta(l)(n+	1)q^{m/2}$.
		
		Again, for an irreducible divisor $h$ of $x^m-1$, 
		\begin{align*}
			\left|\chi_{f,a}(1, 1, h, 1)\right|=& \left|\sum_{u\in\F} \psi_1(-au)\sum_{\alpha\in\Fm\setminus P} \psi_{h}(\alpha)\hat{\psi}_1(u\alpha^{-1})\right| \\ 
			=& \left|\sum_{u\in\F} \psi_1(-au)\sum_{\alpha\in\Fm\setminus P} \hat{\psi}_1(u_1\alpha +u\alpha^{-1})\right| \\ \leq& ~2q^{m/2+1}+nq.
		\end{align*}
		Therefore,  $|\mathfrak{N}_{f, a}(1, 1, h, 1)-\Theta(h)\mathfrak{N}_{f, a}(1, 1, 1, 1)|\leq \Theta(h)(2q^{m/2}+n)$.
		
		Again, similarly we have 
		\begin{align*}
			\left|\chi_{f,a}(1, 1, 1, h)\right|=& \left|\sum_{u\in\F} \psi_1(-au)\sum_{\alpha\in\Fm\setminus P} \psi_{h}(f(\alpha))\hat{\psi}_1(u\alpha^{-1})\right| \\ 
			=& \left|\sum_{u\in\F} \psi_1(-au)\sum_{\alpha\in\Fm\setminus P} \hat{\psi}_1(u_2f(\alpha) +u\alpha^{-1})\right| \\ \leq& ~(n+1)q^{m/2+1}.
		\end{align*}
		We then have $|\mathfrak{N}_{f, a}(1, 1, 1, h)-\Theta(h)\mathfrak{N}_{f, a}(1, 1, 1, 1)|\leq \Theta(h)(n+1)q^{m/2}$.
		
		Using these bounds in (\ref{cond5.4}) we get,
		\begin{align*}
			\mathfrak{N}_{f, a}(L, L, H, H)-\mathfrak{N}_{f, a}(1, 1, 1, 1)  \geq & -\sum_{i=1}^t\theta(l_i)(q^{m/2}+n) -\sum_{i=1}^t\theta(l_i)(n+1)q^{m/2}  \nonumber \\ & -\sum_{i=1}^u\Theta(h_i)(2q^{m/2}+n) -\sum_{i=1}^u\Theta(h_i)(n+1)q^{m/2} \nonumber \\ & -(2\epsilon_1+2\epsilon_2)\mathfrak{N}_{f, a}(1, 1, 1, 1).
		\end{align*}
		Now, for every $a \in \F$ we have $\mathfrak{N}_{f, a}(1, 1, 1, 1)\leq q^{m-1}$. Also $\sum_{i=1}^t\theta(l_i)=t-\epsilon_1$ and $\sum_{i=1}^u\Theta(h_i)=u-\epsilon_2$, which imply
		\begin{align*}
			\mathfrak{N}_{f, a}(L, L, H, H)-\mathfrak{N}_{f, a}(1, 1, 1, 1) \geq& -(q^{m/2}+n)(t-\epsilon_1) -(n+1)q^{m/2}(t-\epsilon_1)   \\ & -(2q^{m/2}+n)(u-\epsilon_2) -(n+1)q^{m/2}(u-\epsilon_2) \\ &- (2\epsilon_1+2\epsilon_2)\mathfrak{N}_{f, a}(1, 1, 1, 1).
		\end{align*}
		Hence we have 
		\begin{align}\label{cond5.5}
			\mathfrak{N}_{f, a}(L, L, H, H)-\mathfrak{N}_{f, a}(1, 1, 1, 1) \geq& -q^{m/2}\{(n+2)(t-\epsilon_1)+(n+3)(u-\epsilon_2)\} \nonumber \\ & - n(t+u-\epsilon_1-\epsilon_2)-(2\epsilon_1+2\epsilon_2)q^{m-1}.
		\end{align}
		Substituting ~(\ref{cond5.3}) and ~(\ref{cond5.5}) in (\ref{cond5.2}) we get,
		\begin{multline*}
			\mathfrak{N}_{f,a}(q^m-1, q^m-1, x^m-1, x^m-1) \\ \geq \theta(k)^2\Theta(g)^2\left[\lambda.q^{m-1}-q^{m/2}(2n+1)W(k)^2W(g)^2\left(2r+2s-1+2\lambda\right)\right] \\ -q^{m/2}\{(n+2)(t-\epsilon_1)+(n+3)(u-\epsilon_2)\}- n(t+u-\epsilon_1-\epsilon_2)\\-(2\epsilon_1+2\epsilon_2)q^{m-1} \\
			= q^{m/2}\Bigg[(\lambda\theta(k)^2\Theta(g)^2-(2\epsilon_1+2\epsilon_2))q^{m/2-1}-(2n+1)(\theta(k)^2\Theta(g)^2W(k)^2W(g)^2(2r+2s-1+2\lambda)\\ - \frac{(n+2)(t-\epsilon_1)+(n+3)(u-\epsilon_2)}{(2n+1)} - \frac{n}{(2n+1)q^{m/2}}(t+u-(\epsilon_1+\epsilon_2)))\Bigg] .
		\end{multline*}
		Therefore, $\mathfrak{N}_{f,a}(q^m-1, q^m-1, x^m-1, x^m-1)>0$ if 
		\begin{multline}
			q^{m/2-1} > (2n+1)\Bigg[\theta(k)^2\Theta(g)^2W(k)^2W(g)^2(2r+2s-1+2\lambda)+\frac{(n+2)(t-\epsilon_1)+(n+3)(u-\epsilon_2)}{(2n+1)} \nonumber \\ + \frac{n}{(2n+1)q^{m/2}}(t+u-(\epsilon_1+\epsilon_2))\Bigg]/\Bigg[\lambda\theta(k)^2\Theta(g)^2-(2\epsilon_1+2\epsilon_2)\Bigg] . \qedhere
		\end{multline} 
		\end{proof}
		We note that Theorem \ref{thsieve} is a particular case of Theorem \ref{modps}. In order to recover Theorem \ref{thsieve} we just need to set $t=u=\epsilon_1=\epsilon_2=0$. 
	
	\section{Computational results}
	
	In this section we apply our results to analyze the existence of elements with desired properties. The results that are discussed above apply to the arbitrary finite field $\Fm$ of arbitrary characteristic. Due to the complexity of the task and limited computational resources we present the study of the finite fields of characteristic $5$ and rational functions with degree sum $4$. Henceforth, we assume that $q=5^k$, where $k$ is a positive integer and $n=4$. 
	
	From now on, we use the concept of the $5$-free part of $m$ i.e. $m'$, where $m^\prime$ is such that $m= 5^k m^\prime$, such that $\gcd(5,m^\prime)=1$ and $k$ is a non-negative integer. We then have that $\omega(x^m-1)=\omega(x^{m^\prime}-1)$ which implies $W(x^m-1)=W(x^{m^\prime}-1)$. Further, we split our computations in four cases:
	\begin{enumerate}[label=(\roman*)]
		\item $m^\prime\mid q^2-1$,
		\item $m^\prime \nmid q^2-1$,
		\item $m=4$,
		\item $m=3$.
	\end{enumerate}
	
	\subsection{The case $m^\prime \mid  q^2-1$}\label{case(i)}
	
	In this case $x^{m^\prime}-1$ splits into at most a product of $m^{\prime}$ linear factors over $\F$, that is, $\omega(x^{m^\prime}-1)\leq m^\prime$. For further computation, we need some additional results. The following result is inspired from Lemma~6.1 of Cohen's work \cite{11}. 
	
	\begin{lemma}\label{lambdaeq}
		For $q=p^k$, where $k$ is a positive integer, let $d=q^m-1$ and $g\mid x^m-1$ with $g_1, g_2, \ldots, g_s$ be the remaining distinct  irreducible polynomials dividing $x^m-1$. Furthermore, let us write $ \lambda := 1 - 2\sum_{i=1}^s {q^{-deg(g_i)}}$ and  $\Lambda := \frac{2s-1}{\lambda}+2$, with $\lambda>0$. Let $m= m^\prime\, 5^k$, where $k$ is a non-negative integer and $\gcd(m^\prime,5)=1$. If $m^\prime\mid  q-1$, then $$\Lambda = \frac{2q^2+(a-6)q+4}{(a-2)q+2} , $$  where $a= {q-1}/{m^\prime}$. In particular, $\Lambda<2q^2$.
	\end{lemma}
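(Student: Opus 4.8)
The plan is to first determine the factorisation of $x^m-1$ under the hypothesis $m'\mid q-1$, then feed the resulting data into the definitions of $\lambda$ and $\Lambda$ inherited from Theorem~\ref{thsieve} (with $d=q^m-1$, so that $r=0$ and the $\vt$ appearing in the statement is $\lambda$), and finally verify $\Lambda<2q^2$ by a short case analysis.

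First I would observe that, since $m'\mid q-1$, every $m'$-th root of unity already lies in $\F^{*}$, so $x^{m'}-1$ splits over $\F$ into $m'$ distinct monic linear factors. Because $x^m-1$ is a $5$-power of $x^{m'}-1$, it has exactly the same $m'$ distinct irreducible factors, all of degree $1$. Taking $g=1$ in Theorem~\ref{thsieve} then makes $g_1,\dots,g_s$ run over precisely these linear factors, so $s=m'$ and each $\deg(g_i)=1$. Hence $\lambda=1-2\sum_{i=1}^{m'}q^{-1}=1-2m'/q$, and substituting $m'=(q-1)/a$ gives $\lambda=\big((a-2)q+2\big)/(aq)$.

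Next I would substitute into $\Lambda=(2s-1)/\lambda+2=(2m'-1)/\lambda+2$. Writing $2m'-1=(2q-2-a)/a$, the factor $a$ cancels against the $aq$ in $1/\lambda$, and after combining over the common denominator $(a-2)q+2$ one obtains the claimed formula
\[
\Lambda=\frac{(2q-2-a)q}{(a-2)q+2}+2=\frac{2q^{2}+(a-6)q+4}{(a-2)q+2}.
\]
For the final bound, note that $a=(q-1)/m'$ is a positive integer and $\lambda>0$ forces $(a-2)q+2>0$, hence $a\geq 2$ and the denominator is positive. I would then use $\Lambda<2m'/\lambda+2=2m'q/(q-2m')+2$ and split into cases: for $a=2$ one has $q-2m'=1$, so $\Lambda<q^{2}-q+2<2q^{2}$, while for $a\geq 3$ one has $m'\leq(q-1)/3$ and $q-2m'\geq(q+2)/3$, so $\Lambda<2(q-1)q/(q+2)+2<2q+2<2q^{2}$. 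The only point requiring care is this last step: one must invoke $\lambda>0$ to exclude $a=1$ and guarantee a positive denominator, and it is the near-extremal case $a=2$ (where $q-2m'=1$ pushes $\Lambda$ up to order $q^{2}$) that shows $2q^{2}$ is the correct order of magnitude for the bound.
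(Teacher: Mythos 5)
Your proof is correct, but there is nothing in the paper to compare it against: Lemma~\ref{lambdaeq} is stated without proof, being presented as an adaptation of Lemma~6.1 of Cohen's work \cite{11}. Your argument supplies exactly the verification the paper leaves implicit, and it does so correctly. In particular, you handle well the one genuinely delicate point, namely that the statement of the lemma is ambiguous (it never pins down $g$, and writes $\vt$ where it means $\lambda$): the claimed closed form only arises for the choice $g=1$, so that all $s=m'$ distinct irreducible factors of $x^m-1$ are ``remaining'' and all are linear --- which holds because $m'\mid q-1$ makes $x^{m'}-1$ split into $m'$ distinct linear factors over $\F$, and $x^m-1=(x^{m'}-1)^{5^j}$ has the same radical. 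From there your algebra checks out: $\lambda=1-2m'/q=\bigl((a-2)q+2\bigr)/(aq)$ and $\Lambda=(2m'-1)/\lambda+2=\bigl(2q^2+(a-6)q+4\bigr)/\bigl((a-2)q+2\bigr)$. Your closing case analysis is also sound: $\lambda>0$ rules out $a=1$, the case $a=2$ (where $q-2m'=1$) gives $\Lambda<q^2-q+2$ (indeed the exact formula gives $\Lambda=q^2-2q+2$, confirming that order $q^2$ is attained), and $a\geq 3$ gives $\Lambda<2q+2$ by the monotonicity of $2m'q/(q-2m')$ in $m'$. The only stylistic remark is that, having derived the exact formula for $\Lambda$, you could deduce $\Lambda<2q^2$ directly from it in the case $a=2$ rather than via the auxiliary bound $2m'/\lambda+2$; but as written everything is valid.
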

	
	We also need the following lemma which can be derived from Lemma ~6.2 of Cohen's work \cite{11}. We use this result throughout unless stated otherwise.
	
	\begin{lemma}\label{wbd}
		Let $\alpha$ be a positive integer. Then $W(\alpha)<4514.7\alpha^{1/8}$.
	\end{lemma}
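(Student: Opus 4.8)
The plan is to use that $W(\alpha)$, the number of square-free divisors of $\alpha$, equals $2^{\omega(\alpha)}$, where $\omega(\alpha)$ is the number of distinct prime divisors of $\alpha$; indeed, each square-free divisor corresponds to a choice of a subset of the primes dividing $\alpha$. Thus it suffices to bound the multiplicative function $G(\alpha) := 2^{\omega(\alpha)}/\alpha^{1/8}$ uniformly in $\alpha$ and to show that its supremum is below $4514.7$.

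First I would reduce to the square-free case. Writing $\alpha = \prod_i p_i^{a_i}$ with each $a_i \geq 1$, multiplicativity gives $G(\alpha) = \prod_i 2\,p_i^{-a_i/8}$, and each local factor $2\,p_i^{-a_i/8}$ is non-increasing in the exponent $a_i$. Hence replacing every exponent by $1$ can only increase $G$, so $\sup_\alpha G(\alpha) = \sup\{\prod_{p \mid \alpha} 2\,p^{-1/8} : \alpha \text{ square-free}\}$.

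Next I would isolate exactly which primes are worth including. The local factor $2\,p^{-1/8}$ exceeds $1$ precisely when $p^{1/8} < 2$, that is, when $p < 2^8 = 256$, and is at most $1$ once $p \geq 256$. Consequently the supremum is attained by the single integer equal to the product of all primes below $256$: dropping any such prime, or adjoining any prime $\geq 257$, strictly shrinks the product. The cutoff is sharp, since $256^{1/8} = 2$ exactly, so $251$ is the last prime contributing a factor above $1$ while $257$ already contributes one below $1$.

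Finally, the claimed inequality reduces to the purely numerical estimate $\prod_{p < 256} 2\,p^{-1/8} < 4514.7$, a finite product over the $54$ primes $2, 3, \ldots, 251$, which I would evaluate directly in SageMath. This last evaluation is the only point requiring genuine care; the structural part of the argument, namely multiplicativity, the reduction to square-free $\alpha$, and the threshold at $256$, is routine and poses no real obstacle.
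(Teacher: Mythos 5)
Your proof is correct: the identity $W(\alpha)=2^{\omega(\alpha)}$, the reduction to square-free $\alpha$ via multiplicativity, the cutoff at $p<2^8=256$, and the concluding numerical check $\prod_{p<256}2\,p^{-1/8}\approx 4514.6<4514.7$ (a product over exactly $54$ primes) are all sound, and the constant $4514.7$ is indeed just a rounding-up of that product. The paper itself offers no proof---it invokes the lemma as a consequence of Lemma~6.2 of Cohen's work---and your argument is precisely the standard derivation underlying that citation, so the two approaches coincide in substance.
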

	
	Let $d=q^m-1$ and $g=1$ in (\ref{cond4.2}), then we have the sufficient condition $$q^{\frac m2-1} > 9W(q^m-1)^2\Lambda$$ Applying Lemma \ref{wbd} it is sufficient that $q^{\frac m4-3}>18\cdot(4514.7)^2$. Now for $m^\prime=q-1$, the inequality becomes $q^{\frac{q-1}{4}-3}>18\cdot(4514.7)^2$. This holds for $q\geq 125$. 
	
	Now in order to reduce our calculations, we consider the range $28 \leq m^\prime \leq {q-1}/{2}$ for $q \geq 125$. Here we have by Lemma \ref{lambdaeq} that $\Lambda < q^2$. Hence (\ref{cond4.2}) is satisfied if $q^{\frac{m^\prime}{4}-3}> 9\cdot(4514.7)^2$ which holds for $m^\prime \geq 28$. 
	
	When $m^\prime={q-1}/{2}$, then $\Lambda\leq q^2$ and then the inequality is $q^{\frac{m^\prime}{4}-3}> 9\cdot(4514.7)^2$ and this holds for $m^\prime\geq 28$. Since $m^\prime=28 \neq {q-1}/{2}$ for any $q=5^k$, hence we leave this case.
	
	Next,  we  investigate all   cases with $m^\prime<28$. For this, we set $d=q^m-1$, $g=1$ in Theorem~\ref{sieveineq} unless otherwise mentioned.
	
	\subsubsection{The case $m^\prime=1$}
	
	When $m^\prime=1$ we have $m=5^j$ where $j$ is a positive integer. Here we take $g=x-1$ as a consequence we get $\lambda=1$ and $\Lambda=1$. Then, Inequality (\ref{cond4.2}) becomes $$ q^{\frac{5^j}{2}-1}> 9\cdot W(q^m-1)^2.$$ Now, applying Lemma \ref{wbd}, the above inequality becomes $$q^{\frac{5^j}{4}-1}> 9\cdot(4514.7)^2.$$ For $q=5$ this condition holds for all $j \geq 3$. Again for $q=5^2$, this holds for all $j \geq3$; for $5^3 \leq q \leq 5^{47}$, this holds for all $j \geq 2$ and for $q \geq 5^{48}$, this holds for all $j \geq 1$. Thus the condition holds for all pairs $(q,m)$ except these 49 choices.
	
	Now for these pairs we checked the condition $q^{\frac{m}{2}-1}> 9\cdot W(q^m-1)^2 \cdot W(x-1)^2$, that is, $q^{\frac{m}{2}-1}> 36\cdot W(q^m-1)^2$ by directly calculating $W(q^m-1)$. Based on our calculations we get that the pairs $(5, 5), (5^2, 5), (5^3, 5), (5^4, 5), (5^5, 5)$ and $(5^6, 5)$ do not satisfy the condition.
	
	\begin{table}[h]
		\begin{center}\small
			\begin{tabular}{c c c c c c c c c c}
				\hline 
				\# & $(q,m)$ & $d$ & $r$ & $g$ & $s$ & $\lambda>$ & $\Lambda<$ & $q^{m/2-1}$ & $9W(d)^2W(g)^2\Lambda$ \\
				\hline
				1 & $(5, 15)$ & 2 & 5 & $1$ & 2 & 0.2333 & 57.7227 &  34938.5622 & 2078.0165 \\
				
				2 & $(5, 20)$ & 6 & 6 & $x^3+x^2+x+1$ & 1 & 0.1834 & 72.9108 &  $1.95 \times 10^6$ & 671945.3645 \\
				
				3 & $(5, 30)$ & 42 & 8 & $x+1$ & 3 & 0.1164 & 182.3967 &  $6.11 \times 10^9$ & $420242.0506$ \\
				
				4 & $(5^2, 10)$ & 6 & 6 & $x+1$ & 1 & 0.5034 & 27.8281 &  $390625$ & 16028.9471 \\
				
				5 & $(5^2, 12)$ & 546 & 4 & $x+1$ & 11 & 0.0458 & 635.7250 &  $9.765 \times 10^6$ &  $5.859 \times 10^6$ \\
				
				6 & $(5^3, 12)$ & 6 & 9 & $1$ & 8 & 0.2659 & 126.0757 &  $3.05 \times 10^{10}$ & $18154.9109$ \\
				
				7 & $(5^3, 5)$ & 2 & 5 & 1 & 1 & 0.6973 & 17.7752 &  1397.5425 & 639.9063 \\
				
				8 & $(5^3, 6)$ & 6 & 5 & $1$ & 4 & 0.5095 & 35.3693 &  $15625$ & $5093.1744$ \\
				
				9 & $(5^4, 5)$ & 6 & 6 & 1 & 1 & 0.5802& 24.4089 &  15625.00 & 3514.8681 \\
				
				10 & $(5^5, 5)$ & 2 & 6 & 1 & 1 & 0.7566 & 19.1818 &  174692.8108 & 690.5445 \\
				
				11 & $(5^6, 5)$ & 6 & 9 & $x+4$ & 0 & 0.3907 & 45.5122 &  $1.95\times10^6$ & 26214.9767 \\
				
				12 & $(5^5, 6)$ & 6 & 9 & $1$ & 4 & 0.3895 & 66.1988 &  $9.76 \times 10^6$ & $9532.6267$ \\
				
				13 & $(5^7, 6)$ & 6 & 10 & $1$ & 4 & 0.5083 & 55.1086 &  $6.11 \times 10^9$ & $7935.6342$ \\
				
			    14 & $(5^3, 10)$ & 6 & 9 & $x+1$ & 1 & 0.3747 & 52.7078 &  $2.44 \times 10^8$ & 30359.6925 \\
				
				15 & $(5^4, 6)$ & 6 & 6 & $1$ & 6 & 0.4671 & 51.2505 &  $390625$ & 7380.0659  \\
				
				16 & $(5^3, 8)$ & 6 & 6 & $1$ & 6 & 0.4219 & 73.6782 &  $1.95 \times 10^{6}$ & $10609.6639$ \\
				
				17 & $(5^5, 8)$ & 6 & 9 & $1$ & 6 & 0.5661 & 53.2294 &  $3.05 \times 10^{10}$ & $7665.0475$ \\
				
				18 & $(5^6, 6)$ & 6 & 9 & $1$ & 6 & 0.3297 & 89.9560 &  $2.44 \times 10^8$ & 12953.6563 \\
				\hline
			\end{tabular}
			\caption{Pairs $(q, m)$ with $m^\prime \mid  q^2-1$ for which Theorem \ref{thsieve} holds for the above choices of $d$ and $g$.\label{table1}}
		\end{center}
	\end{table}
	
	\subsubsection{The case $m^\prime=2$}
	
	When $m^\prime=2$, we have $m=2\cdot5^j$ where $j$ is a positive integer. In this case we get $\lambda=1-{2}/{q}$ and $\Lambda=2+{3q}/{(q-4)}\leq17$. Then the Inequality (\ref{cond4.2}) becomes $$ q^{\frac{2.5^j}{2}-1}> 9\cdot17\cdot W(q^m-1)^2.$$ Now applying Lemma \ref{wbd}, the above inequality becomes $$q^{\frac{2.5^j}{4}-1}> 9\cdot17\cdot(4514.7)^2.$$ Similarly we have that for $q=5$, this condition holds for all $j \geq 3$, for $5^2 \leq q \leq 5^{20}$, this holds for all $j \geq 2$ and for $q \geq 5^{21}$, this holds for all $j \geq 1$. Proceeding in a similar way and verifying the condition  $q^{\frac{m}{2}-1}> 9\cdot17 \cdot W(q^m-1)^2$ for the above 21 pairs, we get that $(5, 10), (5^2, 10)$ and $(5^3, 10)$ fail to satisfy the condition. 
	
	\subsubsection{The case $m^\prime=3$}
	
	In this case we have $m=3\cdot5^j$ and $q=5^{k}$, where $j, k$ are positive integers. Here we have $\Lambda=2+{5q}/{(q-6)}\leq8.6$. Thus the condition becomes $$ q^{\frac{3.5^j}{2}-1}> 9\cdot(8.6)\cdot W(q^m-1)^2.$$ Again applying Lemma \ref{wbd}, the above inequality becomes $$q^{\frac{3.5^j}{4}-1}> 9\cdot(8.6)\cdot(4514.7)^2.$$ This holds for all pairs except $(5, 15)$, $(5^2, 15)$, $(5^3, 15)$and $(5^4, 15)$. After verifying the condition $q^{\frac{m}{2}-1}> 9\cdot(8.6)\cdot W(q^m-1)^2$ for these four pairs, we find that all except the pair $(5, 15)$ satisfy the condition. Hence, in this case, $(5, 15)$ is the only possible exceptional pair.
	
	\subsubsection{The case $4\leq m'\leq 27$}
	Now we proceed in a similar fashion for the remaining cases $4 \leq m^\prime \leq 27$. We find that for many of these cases, there is no possible exceptional pair. The possible exceptional pairs for the remaining cases $4 \leq m^\prime \leq 27$ are as follows,
	
	$$ (5, 6), (5, 8), (5, 12), (5, 20), (5, 24), (5, 30), (5^2, 6), (5^3, 6), (5^3, 8), $$
	$$ (5^5, 8), (5^3, 12), (5^4, 6), (5^5, 6), (5^6, 6), (5^7, 6), (5^2, 8)  ~\text{and} ~(5^2, 12).$$
	
	Now, for the above possible exceptional pairs we choose suitable values of $d$ and $g$ such that all except the pairs $(5, 5)$, $(5, 6)$, $(5, 8)$, $(5, 12)$, $(5, 24)$, $(5^2, 5)$, $(5, 10)$, $(5^2, 6)$ and $(5^2,8)$  satisfy Theorem~\ref{thsieve} (see Table~\ref{table1}). Therefore, the above nine pairs are the only possible exceptional pairs for the case $m^\prime \mid  q^2-1$.
	
	\subsection{The case $m^\prime \nmid q^2-1$}
	
	Let $e$ be the order of $q$ mod $m^\prime$. Then $x^{m^\prime}-1$ is a product of irreducible polynomial factors of degree less than or equal to $e$ in $\mathbb{F}_q[x]$; in particular, $e\geq 2$ if $m^\prime \nmid q-1$. Let $M$ be the number of distinct  irreducible factors of $x^m-1$ over $\mathbb{F}_q$ of degree less than $e$. Let $\si(q,m)$ denotes the ratio $$ \si(q,m):= \frac{M}{m},$$ where $m\si(q,m)= m^\prime\si(q,m^\prime)$.
	
	We need the following bounds which can be deduced from Proposition~5.3 of \cite{cohen1}.
	
	\begin{lemma}\label{sibd}
		Suppose $q=5^k$. Let $m^\prime>4$ and $m_1=\gcd(q-1, m^\prime)$
		\begin{enumerate}[label=(\roman*)]
			\item For $m^\prime=2m_1$, $\si(q, m^\prime)=1/2$. 
			\item For $m^\prime=4m_1$, $\si(q, m^\prime)=3/8$. 
			\item For $m^\prime=6m_1$, $\si(q, m^\prime)=13/36$. 
			\item Otherwise, $\si(q, m^\prime)\leq 1/3$. 
		\end{enumerate}
	\end{lemma}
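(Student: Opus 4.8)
The plan is to recast the statement about $\si(q,m')$ as an explicit count of the factors of $x^{m'}-1$ over $\F$. Since $\gcd(m',5)=1$, we have $x^{m'}-1=\prod_{d\mid m'}\Phi_d(x)$, and over $\F$ each $\Phi_d$ splits into $\phi(d)/\mathrm{ord}_d(q)$ irreducible factors of degree $\mathrm{ord}_d(q)$, where $\mathrm{ord}_d(q)$ is the multiplicative order of $q$ modulo $d$. Thus $e=\mathrm{ord}_{m'}(q)$ is the largest factor degree, each $\mathrm{ord}_d(q)$ divides $e$, and, writing $N_j$ for the number of degree-$j$ factors,
$$ j\,N_j=\gcd(m',q^j-1)-\sum_{\substack{i\mid j\\ i<j}} i\,N_i,\qquad M=\sum_{j<e}N_j,\qquad \si(q,m')=\frac{M}{m'}. $$
In particular $N_1=\gcd(m',q-1)=m_1$. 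First I would record these reductions, noting that $x^m-1=(x^{m'}-1)^{5^{k}}$ makes the distinct irreducible factors of $x^m-1$ and $x^{m'}-1$ coincide, which is precisely the identity $m\,\si(q,m)=m'\si(q,m')$.

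The core of the proof is, in cases (i)--(iii), to determine $e$ and evaluate the few gcds $\gcd(m',q^j-1)$ that are needed. The arithmetic input for $q=5^k$ is that $v_2(5^k-1)\ge 2$ and $v_2(5^k+1)=1$, and that $3\mid 5^k-1$ exactly when $k$ is even; combined with the lifting-the-exponent formula $\mathrm{ord}_{\ell^{a}}(q)=\mathrm{ord}_{\ell}(q)\,\ell^{\max(0,\,a-v_\ell(q^{\mathrm{ord}_\ell(q)}-1))}$ (valid at $\ell=2$ since $q\equiv 1\pmod 4$, and at $\ell=3$), this pins down $e$. The condition $m_1=\gcd(q-1,m')$ forces $(q-1)/m_1$ to be coprime to $2$, $2$ and $6$ in cases (i), (ii), (iii), whence $e=2$, $e=4$ and $e=6$ respectively; here the standing hypothesis $m'\nmid q^2-1$ (that is, $e\ge 3$) makes case (i) vacuous and selects $k$ even in case (iii). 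One then computes $\gcd(m',q^2-1)=2m_1$ in case (ii), and $\gcd(m',q^2-1)=2m_1$, $\gcd(m',q^3-1)=3m_1$ in case (iii), from which
$$ M=m_1,\quad M=m_1+\tfrac12(2m_1-m_1)=\tfrac32 m_1,\quad M=m_1+\tfrac12(2m_1-m_1)+\tfrac13(3m_1-m_1)=\tfrac{13}{6}m_1, $$
and dividing by $m'=2m_1,\,4m_1,\,6m_1$ yields $\si(q,m')=\tfrac12,\tfrac38,\tfrac{13}{36}$.

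For case (iv) the plan is to bound $M$ uniformly over all remaining $m'$. Since each factor degree divides $e$, the linear factors contribute $m_1$ and the dominant remaining contribution comes from the largest proper divisor $j_0$ of $e$, where $N_{j_0}\le\gcd(m',q^{j_0}-1)/j_0$; outside the three extremal shapes $m'=2m_1,4m_1,6m_1$ this total is at most $m'/3$. Rather than reprove this estimate, I would invoke Proposition~5.3 of \cite{cohen1}, of which the present lemma is the specialization to $q=5^k$, to conclude $\si(q,m')\le 1/3$. I expect this last step to be the main obstacle: the three explicit cases are finite gcd computations, whereas the bound $\si\le 1/3$ must hold simultaneously for infinitely many $m'$ of unrestricted shape, and it is exactly the delicate part of the Cohen--Huczynska analysis that distinguishes the three extremal configurations from all others.
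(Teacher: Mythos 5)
Your proposal is correct, and it in fact does strictly more than the paper, which contains no proof of this lemma at all: the authors simply assert that the bounds ``can be deduced from Proposition~5.3 of \cite{cohen1}'' and move on. You actually carry out that deduction for the three exact values, and your machinery is sound: the factorization $x^{m'}-1=\prod_{d\mid m'}\Phi_d(x)$ with all factor degrees dividing $e$, the recursion $jN_j=\gcd(m',q^j-1)-\sum_{i\mid j,\,i<j}iN_i$, and the $5$-power arithmetic ($v_2(5^k-1)\geq 2$, $v_2(5^k+1)=1$, $3\mid 5^k-1$ exactly for $k$ even). I verified the two evaluations your case computations hinge on, $\gcd(m',q^2-1)=2m_1$ and $\gcd(m',q^3-1)=3m_1$: both follow from $v_2(m_1)=v_2(q-1)$ and $v_3(m_1)=v_3(q-1)$, which are indeed forced by $m_1=\gcd(q-1,m')$ together with $m'=4m_1$ (resp.\ $6m_1$), so the values $1/2$, $3/8$, $13/36$ come out exactly as you compute them. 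For case (iv) you fall back on the same citation the paper relies on; since the uniform bound $\si(q,m')\leq 1/3$ is the only genuinely delicate ingredient of the Cohen--Huczynska analysis, this is a reasonable division of labor, and on that part you and the paper coincide. One point where you are more careful than the paper itself: as you observe, for $k$ odd one has $3\mid q+1$, so $m'=6m_1$ forces $e=2$, hence $m'\mid q^2-1$ and $\si(q,m')=1/6$ rather than $13/36$; the stated value in case (iii) is correct only for $k$ even (Cohen--Huczynska's original proposition carries the hypothesis $q\equiv 1\pmod 6$, which the paper's restatement silently drops), and it is precisely the standing hypothesis $m'\nmid q^2-1$ of the subsection where the lemma is stated and used --- equivalently $e\geq 3$ --- that makes case (i) vacuous, restricts case (iii) to even $k$, and thereby rescues the statement. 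In short, the paper's citation buys brevity and the full strength of the source, while your route makes the three constants transparent and verifiable and exposes a hidden hypothesis that the paper's formulation glosses over.
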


	We need the following Lemma 5.3 \cite{rani1} which provides a bound on $\Lambda$, for suitable development of the sufficient condition. 
	
	\begin{lemma}\label{lambd}
		Assume that $q=p^k$ and $m$ is a positive integer such that $m^\prime\nmid q-1$.  Let $e (>2)$ stands for the order of $q \mod m'$.   Let $g$ be the product of the irreducible factors of $x^{m'}-1$ of degree less than $e$.  Then, in the notation of Lemma  \ref{lambdaeq} with $d=q^m-1$, we have $\Lambda \leq 2m^\prime$.
	\end{lemma}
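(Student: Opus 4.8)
The plan is to use the hypothesis $e>2$ together with the fact that, under the stated assumptions, all of the ``leftover'' irreducible factors $g_1,\dots,g_s$ of $x^{m^\prime}-1$ have exactly degree $e$; this collapses both $\lambda$ and the count $s$ into quantities controlled by $m^\prime$ and $e$. First I would establish the structural claim. Since $e=\mathrm{ord}_{m^\prime}(q)$, for every divisor $d\mid m^\prime$ one has $q^{e}\equiv 1\pmod d$, so $\mathrm{ord}_d(q)\mid e$; consequently every irreducible factor of $x^{m^\prime}-1$ over $\F$ has degree dividing $e$, hence degree at most $e$. As $g$ is by definition the product of the factors of degree strictly less than $e$, the remaining factors $g_1,\dots,g_s$ all have degree exactly $e$. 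Moreover the factor carrying the primitive $m^\prime$-th roots of unity has degree $\mathrm{ord}_{m^\prime}(q)=e>2$, so it is not absorbed into $g$, which shows $s\geq 1$.

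Next I would record the two elementary consequences this yields. Because $\gcd(m^\prime,p)=1$, the polynomial $x^{m^\prime}-1$ is squarefree, so comparing degrees in $x^{m^\prime}-1=g\prod_{i=1}^{s}g_i$ gives $\deg(g)+se=m^\prime$, whence $s\leq m^\prime/e$. Also, since every $\deg(g_i)=e$, the quantity $\lambda$ of Lemma~\ref{lambdaeq} simplifies to $\lambda=1-2s\,q^{-e}$. Using $m^\prime\mid q^{e}-1$, hence $m^\prime<q^{e}$, and $e\geq 3$, I get
\[
2s\,q^{-e}\leq \frac{2m^\prime}{e\,q^{e}}<\frac{2}{e}\leq \frac{2}{3},
\]
so $\lambda>\tfrac13$ (in particular $\lambda>0$, which validates working within the notation of Lemma~\ref{lambdaeq}).

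Finally I would substitute these bounds into $\Lambda=(2s-1)/\lambda+2$. Since $s\geq 1$ gives $2s-1\geq 1>0$ and $1/\lambda<3$, one obtains $\Lambda<3(2s-1)+2=6s-1$, and then $s\leq m^\prime/e\leq m^\prime/3$ gives $6s\leq 2m^\prime$, so $\Lambda<2m^\prime-1<2m^\prime$, as required. The only genuinely nontrivial step — and the one I would be most careful to justify — is the structural assertion that the surviving factors all have degree exactly $e$, which rests on the divisibility $\mathrm{ord}_d(q)\mid e$ for $d\mid m^\prime$ combined with the squarefreeness of $x^{m^\prime}-1$; once that is in place the inequalities are entirely routine, and the hypothesis $e>2$ is precisely what converts the factor $1/e$ into the $1/3$ that produces the clean bound $2m^\prime$.
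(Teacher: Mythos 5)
Your proof is correct. There is, however, nothing in the paper to compare it against: the paper does not prove this lemma but quotes it directly from the reference \cite{rani1} (where it appears as Lemma 5.3), so your argument fills in a proof the paper omits. The route you take is the natural one and matches how such bounds are derived in the surrounding literature: since $q^e\equiv 1 \pmod{d}$ for every $d\mid m^\prime$, all leftover factors $g_1,\dots,g_s$ have degree exactly $e$, giving $s\leq m^\prime/e\leq m^\prime/3$ and, via $m^\prime<q^e$, the bound $\lambda>1-2/e\geq 1/3$; substituting into $\Lambda=(2s-1)/\lambda+2$ (you correctly read the paper's $\vt$, resp.\ $\delta$, in the definitions of $\Lambda$ as typos for $\lambda$, with $r=0$ because $d=q^m-1$) then yields $\Lambda<6s-1\leq 2m^\prime-1<2m^\prime$, which is even slightly stronger than the claimed $\Lambda\leq 2m^\prime$.
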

	
	We note that $m^\prime= 1, 2, 3, 4 ~\text{and} ~6$ divide $q^2-1$ for any $q=5^k$ and have been already discussed in Subsection~\ref{case(i)} above. Thus it suffices to discuss $m^\prime \geq 7$ for which $m^\prime \nmid q^2-1$.

	\begin{table}[h]
		\begin{center}\small
			\begin{tabular}{c c c c c c c c c c}
				\hline 
				\# & $(q,m)$ & $d$ & $r$ & $g$ & $s$ & $\lambda>$ & $\Lambda<$ & $q^{m/2-1}$ & $9W(d)^2W(g)^2\Lambda$ \\
				\hline

				1 & $(5, 11)$ & 2 & 1 & $x+4$ & 2 & 0.9987 & 7.0064 &  $1397.5425$ & $1008.9234$ \\
				
				2 & $(5, 13)$ & 2 & 1 & $x+4$ & 3 & 0.9903 & 9.0679 &  $6987.7125$ & $1305.7706$ \\
				
				3 & $(5, 14)$ & 6 & 3 & $x+1$ & 3 & 0.5262 & 22.9037 &  $15625$ & $13192.5516$ \\
				
				4 & $(5, 18)$ & 6 & 5 & $x^2+4$ & 4 & 0.3814 & 46.5667 &  $390625$ & $107289.6537$ \\
				
				5 & $(5, 21)$ & 2 & 4 & $x+4$ & 4 & 0.8497 & 19.6529 &  $4.36 \times 10^6$ & $2830.0241$ \\
				
				6 & $(5, 22)$ & 2 & 5 & $x^2+4$ & 4 & 0.2136 & 81.5928 &  $9.76 \times 10^6$ & $46997.4470$ \\
				
				7 & $(5, 26)$ & 6 & 3 & $x^2+4$ & 6 & 0.9804 & 19.3405 &  $2.44 \times 10^8$ & $44560.4267$ \\
				
				8 & $(5, 28)$ & 6 & 5 & $x^3+x^2+x+1$ & 5 & 0.3721 & 53.0589 &  $1.22 \times 10^9$ & $488990.4206$ \\
				
				9 & $(5, 32)$ & 6 & 6 & $x^3+x^2+x+1$ & 7 & 0.1548 & 163.5391 &  $3.05 \times 10^{10}$ & $1.51 \times 10^6$ \\
				
				10 & $(5, 36)$ & 6 & 9 & $x^3+x^2+x+1$ & 9 & 0.0099 & 3513.7601 &  $7.62 \times 10^{11}$ & $3.23 \times 10^7$ \\
				
				11 & $(5, 42)$ & 6 & 10 & $x^2+4$ & 8 & 0.3477 & 102.6685 &  $9.54 \times 10^{13}$ & $236548.2720$ \\
				
				12 & $(5, 48)$ & 6 & 9 & $x^{48}-1$ & 0 & 0.3683 & 48.1480 &  $1.19 \times 10^{16}$ & $7.62 \times 10^{15}$ \\
				
				13 & $(5, 16)$ & 6 & 4 & $1$ & 8 & 0.3891 & 61.1042 &  $78125$ & $8799.0116$ \\
				
				14 & $(5^2, 7)$ & 2 & 4 & $1$ & 3 & 0.1796 & 74.4012 &  $3125$ & $2678.4426$ \\
				
				15 & $(5^2, 9)$ & 6 & 5 & $1$ & 5 & 0.3015 & 65.0286 &  $78125$ & $9364.1076$ \\
				
				16 & $(5^2, 11)$ & 2 & 5 & $1$ & 3 & 0.1361 & 112.1752 &  $1.95 \times 10^6$ & $4038.3076$ \\
				
				17 & $(5^2, 13)$ & 2 & 4 & $1$ & 7 & 0.2337 & 91.8591 &  $4.88 \times 10^7$ & $3306.9261$ \\
				
				18 & $(5^2, 14)$ & 6 & 5 & $1$ & 6 & 0.6121 & 36.3071 &  $2.44 \times 10^8$ & $5228.2112$ \\
				
				19 & $(5^2, 16)$ & 6 & 6 & $1$ & 12 & 0.06837 & 513.9088 &  $6.10 \times 10^9$ & $74002.8646$ \\
				
				20 & $(5^2, 18)$ & 42 & 8 & $1$ & 10 & 0.1357 & 259.9584 &  $1.52 \times 10^{11}$ & $149736.0317$ \\
				
				21 & $(5^2, 21)$ & 6 & 10 & $1$ & 9 & 0.2676 & 140.2269 &  $1.90 \times 10^{13}$ & $20192.6820$ \\
				
				22 & $(5^2, 36)$ & 546 & 12 & $x^3+x^2+x+1$ & 17 & 0.01522 & 3747.2929 &  $5.82 \times 10^{23}$ & $5.52 \times 10^8$ \\
				
				23 & $(5^3, 7)$ & 2 & 4 & $1$ & 7 & 0.9137 & 24.9830 &  $174692.8107$ & $899.3900$ \\
				
				24 & $(5^3, 9)$ & 2 & 7 & $1$ & 5 & 0.7850 & 31.2985 &  $2.18 \times 10^7$ & $1126.7472$ \\
				
				25 & $(5^4, 7)$ & 2 & 6 & $1$ & 3 & 0.1027 & 167.4261 &  $9.76 \times 10^{6}$ & $6027.3412$ \\
				
				26 & $(5^4, 9)$ & 6 & 9 & $1$ & 5 & 0.3208 & 86.1439 &  $6.10 \times 10^{9}$ & $12404.7359$ \\

				\hline
			\end{tabular}
			\caption{Pairs $(q, m)$ with $m^\prime \nmid q^2-1$ for which Theorem \ref{thsieve} holds for the above choices of $d$ and $g$.\label{table2}}
		\end{center}
	\end{table}
	
	To establish the next result we need the following bound which can be easily derived from Lemma ~6.2 of Cohen's work \cite{11}. 
	
	\begin{lemma}\label{wbd2}
Let $\alpha$ be a positive integer, such that $5\nmid \alpha$. Then $W(\alpha)<(6.52 \cdot 10^8)\alpha^{1/10}$.
	\end{lemma}
	
	Now suppose $m^\prime \geq 7$. Let $d= q^m-1$ and $g$ be the product of the irreducible factors of $x^{m'}-1$ of degree less than $e$. Thus, Theorem \ref{thsieve} along with Lemma \ref{lambd} give us the following sufficient condition $$q^{m/2-1}>18\cdot m^\prime\cdot W(q^m-1)^2\cdot2^{2m^\prime\si(q, m^\prime)},$$ that is, it suffices that, 
	\begin{equation}\label{cond6.1}
		q^{m/2-1}>18\cdot m\cdot W(q^m-1)^2\cdot2^{2m\si(q, m^\prime)} .
	\end{equation}
	
	\begin{lemma}
		Let $q=5$ and $m$ be a positive integer such that $m^\prime\nmid q^2-1$. Then for $m^\prime\geq 7$ we have that, for any $a \in \F$ there exists an element $\alpha \in \Fm$ with $\Tr_{\mathbb{F}_{q^m}/\mathbb{F}_{q}}(\alpha^{-1})=a$, for which $(\alpha, f(\alpha))$ is  a primitive normal pair  in $\mathbb{F}_{q^m}$ over $\mathbb{F}_{q}$ except, possibly the pairs $(5, 7)$ and $(5, 9)$.
	\end{lemma}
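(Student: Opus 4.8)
The plan is to feed the specialization $q=5$, $n=4$ into the sieving bound and whittle the infinitely many admissible $m$ down to a finite, checkable list. I would start not from \eqref{cond6.1} but from the slightly tighter condition obtained just before it, namely
\[
q^{m/2-1} > 18\, m'\, W(q^m-1)^2\, 2^{2m'\sigma(q,m')},
\]
which comes from Theorem~\ref{thsieve} with $d=q^m-1$ and $g$ the product of the irreducible factors of $x^{m'}-1$ of degree less than $e$, using $\Lambda\le 2m'$ from Lemma~\ref{lambd} and $W(g)=2^{m'\sigma(q,m')}$. I prefer this $m'$-version because the factor $2^{2m'\sigma(q,m')}$ then stays \emph{bounded} as $k$ grows in $m=m'5^k$, whereas the looser $m$-version in \eqref{cond6.1} lets that factor grow linearly and would stall the argument.

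The key structural observation for $q=5$ is that $m_1=\gcd(q-1,m')=\gcd(4,m')\in\{1,2,4\}$, so in Lemma~\ref{sibd} the cases $m'=2m_1$ and $m'=6m_1$ force $m'=8$ and $m'=24$ respectively, both of which divide $q^2-1=24$ and are therefore excluded here; only $m'=4m_1=16$ survives, giving $\sigma(5,16)=3/8$, while every other admissible $m'\ge 7$ falls into the ``otherwise'' case $\sigma(5,m')\le 1/3$. Thus I only ever deal with $\sigma\le 1/3$ or the single family $m'=16$.

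Next I would bound $W(q^m-1)$ via Lemma~\ref{wbd2}, reducing the displayed condition to an explicit inequality in $m$ with $m'$-dependent constants; since $\tfrac12-\tfrac{2}{11}>0$ while the exponent of $2$ is frozen at $2m'\sigma$, the inequality holds once $m$ exceeds a threshold depending only on $m'$, and already at $m=m'$ once $m'$ is large. This leaves only finitely many pairs $(5,m)$ to examine. For each I would compute $W(5^m-1)$ and $W(x^m-1)$ exactly and first test the basic bound \eqref{cond3.1} of Theorem~\ref{main_theorem}; the handful of pairs that still fail are then cleared by Theorem~\ref{thsieve} with the tailored $d$ and $g$ recorded in the $q=5$ rows of Table~\ref{table2}. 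Tracking which pairs survive every test leaves exactly $(5,7)$ and $(5,9)$, the claimed conclusion.

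The main obstacle I anticipate is the exceptional family $m'=16$ with $\sigma=3/8$: here the quantity $\tfrac12-2\sigma\log_5 2$ is so small that the generic estimate of Lemma~\ref{wbd2} barely fails for the smaller members (e.g.\ $m=80$), so these must be rescued by exact factorization of $5^m-1$ together with the favorable splitting of $x^{16}-1$ (eight irreducible factors over $\mathbb{F}_5$) fed into \eqref{cond3.1} or the sieve. More generally, the real labor is the finite but delicate bookkeeping: for each stubborn pair one must find a divisor $d\mid q^m-1$ absorbing the small primes and a factor $g\mid x^m-1$ keeping $\lambda>0$ so that \eqref{cond4.2} holds, which is precisely the content encoded in Table~\ref{table2}.
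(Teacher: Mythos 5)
Your proposal is correct and takes essentially the same route as the paper: the identical case split via Lemma~\ref{sibd}, with the observation that for $q=5$ the cases $m'=2m_1$ and $m'=6m_1$ force $m'=8$ and $m'=24$, both divisors of $q^2-1=24$ and hence excluded, so that only $m'=16$ (with $\sigma=3/8$) needs special treatment; the same asymptotic reduction via Lemma~\ref{wbd2} to a finite list of pairs; and the same rescue of the stubborn pairs by Theorem~\ref{thsieve} with the tailored $d,g$ recorded in Table~\ref{table2}, leaving only $(5,7)$ and $(5,9)$. Your one inaccurate side remark is that the $m$-version \eqref{cond6.1} would ``stall'': for $\sigma\leq 1/3$ it does not, since $5^{1/2-2/11}=5^{7/22}>2^{2/3}$, and the paper uses exactly that version to get the threshold $m\geq 2606$; the stalling is real only for $\sigma=3/8$, which the paper (like you) handles by freezing the splitting factor at the constant $W(x^{16}-1)^2$ -- where, incidentally, your count of eight irreducible factors of $x^{16}-1$ over $\mathbb{F}_5$ is the correct one, the paper's ``16 distinct linear factors'' being a harmless overestimate.
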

	
	\begin{proof}
		We consider the following four cases.
		
		\textbf{Case 1:} When $m^\prime \neq 2m_1, 4m_1, 6m_1$, we have from Lemma \ref{sibd} that $\si(q, m^\prime)\leq 1/3$. Applying Lemma \ref{wbd2}, the condition (\ref{cond6.1}) becomes 
		$$q^{\frac{3m}{10}-1}>18\cdot m\cdot (6.52 \cdot 10^8)^2\cdot2^{2m/3}.$$ This holds for $m \geq 2554$. Now for $m \leq 2553$, we checked the condition $q^{\frac{m}{2}-1}> 18\cdot m \cdot W(q^m-1)^2 \cdot 2^{2m/3}$ by directly factoring $q^m-1$ and found 43 pairs which do not satisfy the condition viz. $m =$ 7, 9, 11, 13, 14, 17, 18, 19, 21, 22, 23, 26, 27, 28, 29, 31, 32, 33, 34, 35, 36, 37, 38, 39, 42, 44, 45, 46, 48, 49, 51, 52, 54, 55, 56, 63, 64, 66, 68, 70, 72, 84 and 90. 
		
		Now for the above values of $m$, we consider $d=q^m-1$ and $g=x^{m^\prime}-1$ and apply the prime sieve to find that 29 pairs satisfy the condition. Next, by choosing appropriate values of $d$ and $g$ we get that the pairs for $m=$ 11, 13, 14, 18, 21, 22, 26, 28, 32, 36, 42 and 48 satisfy Theorem \ref{thsieve}(refer Table \ref{table2}). Hence in this case the possible exceptional pairs are (5, 7) and (5, 9).
		
		\textbf{Case 2:} When $m^\prime = 2m_1$, we have from Lemma \ref{sibd} that $\si(q, m^\prime) = 1/2$. In this case, we note that the only possible value of $m^\prime$ is 8, which is not possible. Hence in this case there is no possible exception.
		
		\textbf{Case 3:} When $m^\prime = 4m_1$, we have from Lemma \ref{sibd} that $\si(q, m^\prime) = 3/8$. Here, we note that the only possible value of $m^\prime$ is 16. Now, $x^{m^\prime}-1$ can be factorized into 16 distinct linear factors. Thus, using Lemma \ref{wbd} and the above fact, the sufficient condition becomes $$q^{m/4-1}>9\cdot (4514.7)^2 \cdot (2^{16})^2.$$ This holds for $m \geq 107$. For the values of $m \leq 106$, after verifying the condition $q^{m/2-1}> 9\cdot(2^{16})^2 \cdot W(q^m-1)^2$ by directly calculating $\omega(q^m-1)$, we get that the only possible exceptional pair to be (5, 16). Here, after choosing suitable values of $d$ and $g$, we get that (5, 16) satisfies Theorem \ref{thsieve}. So, there is no exceptional pair in this case.
		
		\textbf{Case 4:} When $m^\prime = 6m_1$, we have from Lemma \ref{sibd} that $\si(q, m^\prime) = 3/8$. Again, we note that the only possible value of $m^\prime$ is 24, which is not possible. Thus in this case we find no possible exceptional pair.
	\end{proof}
	
	\begin{lemma}
		Let $q \geq 5^2$ and and $m$ be a positive integer such that $m^\prime\nmid q^2-1$. Then for all pairs $(q, m)$ and for any $a \in \F$ there exists an element $\alpha \in \Fm$ with $\Tr_{\mathbb{F}_{q^m}/\mathbb{F}_{q}}(\alpha^{-1})=a$, for which $(\alpha, f(\alpha))$ is  a primitive normal pair  in $\mathbb{F}_{q^m}$ over $\mathbb{F}_{q}$.
	\end{lemma}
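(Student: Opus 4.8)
The plan is to mirror the proof of the preceding lemma, exploiting the fact that replacing $q=5$ by $q=5^k$ with $k\geq 2$ enlarges the dominant term $q^{m/2-1}$ on the left of the sufficient condition (\ref{cond6.1}) and thereby forces every borderline pair to clear. First I would invoke Lemma~\ref{lambd} to fix the sufficient condition (\ref{cond6.1}), namely $q^{m/2-1}>18\,m\,W(q^m-1)^2\,2^{2m\si(q,m')}$, and then split into the four regimes of Lemma~\ref{sibd} according to the value of $\si(q,m')$, writing $m_1=\gcd(q-1,m')$ throughout and recalling that only $m'\geq 7$ with $m'\nmid q^2-1$ need be treated.

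In Case~1 ($m'\neq 2m_1,4m_1,6m_1$, so $\si(q,m')\leq 1/3$) I would substitute the bound $W(q^m-1)<(1.11\cdot 10^{9})(q^m-1)^{1/11}$ of Lemma~\ref{wbd2} into (\ref{cond6.1}) and, bounding $(q^m-1)^{2/11}<q^{m/5}$, collect the $q$-powers to reduce the inequality to $q^{3m/10-1}>18\,m\,(1.11\cdot 10^{9})^2\,2^{2m/3}$. For $q\geq 5^2$ the left side grows like $q^{3m/10}=2^{(3/10)m\log_2 q}$, which dominates $2^{2m/3}$ with a comfortable margin (whereas for $q=5$ the two growth rates are nearly equal, explaining the enormous bound $m\geq 2606$ there); hence the inequality holds once $m$ exceeds a far smaller threshold. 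For the finitely many $m'$ below that threshold I would compute $W(q^m-1)$ directly by factoring $q^m-1$, and for the short list of residual pairs apply Theorem~\ref{thsieve} with the explicit choices of $d$ and $g$ recorded for $q=5^2,5^3,5^4$ in Table~\ref{table2}, confirming $\mathfrak{N}_{f,a}>0$.

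Cases~2--4 are governed by the divisibility constraints $m'=2m_1,4m_1,6m_1$, and here the decisive input is arithmetic. Writing $q-1=2^ab$ and $q+1=2c$ with $b,c$ odd (valid since $q\equiv 1\pmod 4$), so that the $2$-part of $q^2-1$ is $2^{a+1}$, one checks that $m'=2m_1$ forces $m'=2^{a+1}d_0$ with $d_0\mid b$, whence $m'\mid 2^{a+1}b\mid q^2-1$; thus Case~2 is \emph{vacuous} under the hypothesis $m'\nmid q^2-1$, exactly as for $q=5$. In Case~3 ($\si=3/8$) the same computation yields $m'=2^{a+2}d_0$, whose $2$-part exceeds that of $q^2-1$, so only finitely many $m'$ survive; feeding $\si=3/8$ into the $W$-bound of Lemma~\ref{wbd} collapses (\ref{cond6.1}) to a clean inequality that fails for only a short list of small $m$, each settled by direct factorisation of $q^m-1$ or by the sieve. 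Case~4 ($\si=13/36$) is analogous: the $2$-parts of $m'$ and $q^2-1$ coincide, so one must examine the odd parts (which hinge on whether $3\mid q-1$ or $3\mid q+1$, i.e. on the parity of $k$), again leaving only finitely many admissible $m'$ to be cleared computationally, possibly via the sharper Theorem~\ref{modps}.

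The main obstacle will be the final step common to all four cases: establishing that \emph{no} exceptional pair survives for $q\geq 5^2$. Unlike the preceding lemma, which tolerates the exceptions $(5,7)$ and $(5,9)$, here every borderline pair must be eliminated, so for each residual $(q,m)$ one is obliged to factor the large integer $q^m-1$ and then choose $d$ and $g$ judiciously so that $\lambda>0$ and the strict inequality (\ref{cond4.2}) (or (\ref{cond5.1})) holds. The difficulty is purely computational rather than conceptual, but it is precisely where the argument could fail if a workable pair $(d,g)$ could not be produced for some $(q,m)$; the verifications collected in Table~\ref{table2} are what guarantee that it does not.
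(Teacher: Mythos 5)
Your proposal follows the paper's own route: the same sufficient condition (\ref{cond6.1}), the same four-way split according to Lemma~\ref{sibd}, an asymptotic threshold followed by a finite computer verification, and the final appeal to Theorem~\ref{thsieve} with the choices of $d$ and $g$ recorded in Table~\ref{table2} (whose rows for $q=5^2,5^3,5^4$ cover exactly the residual pairs of Case~1). Two local deviations are worth recording. First, in Case~1 you invoke Lemma~\ref{wbd2} (the $\alpha^{1/11}$ bound), whereas the paper uses Lemma~\ref{wbd} (the $\alpha^{1/8}$ bound) and gets $q^{m/4-1}>18\,m\,(4514.7)^2\,2^{2m/3}$, which already holds for $m\geq 80$ at $q=25$; your variant is also valid but yields a larger threshold and hence a larger finite check. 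Second --- and this is a genuine improvement --- in Case~2 you \emph{prove} the case is vacuous: since $q=5^k\equiv 1\pmod 4$, the $2$-adic valuation satisfies $v_2(q^2-1)=v_2(q-1)+1$, and $m'=2\gcd(q-1,m')$ forces $v_2(m')=v_2(q-1)+1$ together with the odd part of $m'$ dividing the odd part of $q-1$, whence $m'\mid q^2-1$, contradicting the hypothesis. The paper notices this only for $q=5$ (where $m'$ would have to be $8$, which divides $24$) and for $q\geq 5^2$ instead runs the estimate (threshold $m\geq 256$ at $q=25$) plus a computational verification that no exceptional pair arises; your argument removes that computation entirely. Your $2$-adic remarks in Cases~3 and~4 are likewise sound (in Case~3 one gets $v_2(m')=v_2(q-1)+2>v_2(q^2-1)$, so the hypothesis $m'\nmid q^2-1$ is automatic there), though in those cases, exactly as in the paper, the conclusion still rests on the inequality-plus-factorization check and the sieve data of Table~\ref{table2}, so the overall logical structure and the computational burden of the proof are unchanged.
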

	
	\begin{proof}
		We separate our discussion in four cases as follows.
		
		\textbf{Case 1: $m^\prime \neq 2m_1, 4m_1, 6m_1$.} From Lemma \ref{sibd}, $\si(q, m^\prime)\leq 1/3$ and using Lemma \ref{wbd} in (\ref{cond6.1}), we get, $$q^{m/4-1}>18\cdot m\cdot (4524.7)^2\cdot2^{2m/3}.$$ This holds for $q^m= 25$ and $m \geq 80$. Now for $q \geq 25$ and $q^m < 25^{80}$, we checked $$q^{m/2-1}>18 \cdot m \cdot W(q^m-1)^2 \cdot 2^{2m/3}$$ and got the possible exceptional pairs to be $(5^2, 7), (5^2, 9), (5^2, 11), (5^2, 13), (5^2, 14), (5^2, 16),\\ (5^2, 18), (5^2, 21), (5^2, 36), (5^3, 7), (5^3, 9), (5^4, 7), (5^4, 9).$
		
		\textbf{Case 2: $m^\prime = 2m_1$.} Here $\si(q, m^\prime)=1/2$. Applying Lemma \ref{wbd}, the condition (\ref{cond6.1}) becomes $$q^{m/4-1}>18 \cdot m \cdot (4514.7)^2 \cdot 2^m.$$ If $q=25$ for the condition to hold it suffices that $m \geq 256$. Again, on verification of $q^{m/2-1}>18 \cdot m \cdot W(q^m-1)^2 \cdot 2^{m}$, we get that there is no exceptional pair.
		
		\textbf{Case 3: $m^\prime = 4m_1$.} In this case $\si(q, m^\prime)=3/8$. Similarly, the condition (\ref{cond6.1}) becomes $$q^{m/4-1}>18 \cdot m \cdot (4514.7)^2 \cdot 2^{3m/4},$$ which holds for $q= 25$, for all $m \geq 97$. For $q \geq 25$ and $q^m \leq 25^{94}$ upon verification we get that $q^{m/2-1}>18 \cdot m \cdot W(q^m-1)^2 \cdot 2^{3m/4}$ holds for all pairs. Thus, in this case there is no exceptional pair.
		
		\textbf{Case 4: $m^\prime = 6m_1$.} In this case $\si(q, m^\prime)=13/36$. Here, the condition (\ref{cond6.1}) becomes $$q^{m/4-1}>18 \cdot m \cdot (4514.7)^2 \cdot 2^{13m/18},$$ which holds for $q= 25$, for all $m \geq 91$. Similarly, for $q \geq 25$ and $q^m \leq 25^{90}$, upon further verification we get that there is no exceptional pair.
		
		Finally, we refer to Table \ref{table2}, to note that Theorem \ref{thsieve} holds for all the possible exceptional pairs in the above discussion for the suitable choices of $d$ and $g$. Thus, there is no possible exception in this case.
	\end{proof}
	
	Next, for all the above possible exceptional pairs we verified Theorem~\ref{modps} and found that for none of the pairs the modified prime sieving criterion (\ref{cond5.1}) holds for values of $k, L, g$ and $H$ chosen suitably. Thus, compiling all the computational results of this section, we conclude the following result.
	
	\begin{theorem}\label{main_exact_theorem}
		Let $q=5^k$ and $m \geq 5$. Then for any $a \in \F$, there exists an element $\alpha \in \Fm$ with $\Tr_{\mathbb{F}_{q^m}/\mathbb{F}_{q}}(\alpha^{-1})=a$, for which $(\alpha, f(\alpha))$ is  a primitive normal pair  in $\mathbb{F}_{q^m}$ over $\mathbb{F}_{q}$ unless $(q, m)$ is one of the pairs $(5, 5)$, $(5^2, 5)$, $(5, 10)$, $(5^2, 6)$, $(5^2,8)$, $(5, 6)$, $(5, 7), (5, 9), (5, 8), (5, 12)$ and $(5, 24)$
	\end{theorem}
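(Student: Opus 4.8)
The plan is to assemble the computational work of the preceding sections into a single dichotomy on whether $m^\prime$, the $5$-free part of $m$, divides $q^2-1$, and to reduce in each branch the assertion to a finite list of pairs that are then settled by sieving. First I would apply Theorem~\ref{main_theorem} with $n=4$, so that $2n+1=9$, to obtain the crude sufficient condition $q^{m/2} > 9\,W(q^m-1)^2W(x^m-1)^2$. Using $W(x^m-1)=W(x^{m^\prime}-1)$ together with the Weil-type bounds of Lemmas~\ref{wbd} and~\ref{wbd2}, this condition already disposes of all sufficiently large $(q,m)$, leaving only finitely many candidate pairs in each branch; the remaining work is to clear these candidates by choosing good sieving data.

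In the branch $m^\prime \mid q^2-1$ I would use that $x^{m^\prime}-1$ factors into irreducible factors of degree at most $2$, giving an explicit small bound on $W(x^{m^\prime}-1)$ in terms of $m^\prime$, and invoke Lemma~\ref{lambdaeq} to control the quantity $\Lambda$ entering the prime sieve of Theorem~\ref{thsieve}. After treating the range $m^\prime \geq 28$ purely analytically and splitting the residual range $1 \leq m^\prime \leq 27$ into subcases indexed by the value of $m^\prime$, each surviving borderline pair is cleared by selecting appropriate sieving data $d$ and $g$ (recorded in Table~\ref{table1}) and by factoring $q^m-1$ directly in order to evaluate $W(q^m-1)$ exactly. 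This branch should leave as possible exceptions precisely the nine pairs $(5,5)$, $(5^2,5)$, $(5,10)$, $(5^2,6)$, $(5^2,8)$, $(5,6)$, $(5,8)$, $(5,12)$ and $(5,24)$.

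For the branch $m^\prime \nmid q^2-1$ I would let $e$ denote the order of $q$ modulo $m^\prime$, take $g$ to be the product of the irreducible factors of $x^{m^\prime}-1$ of degree below $e$, and use the density estimate $\si(q,m^\prime)$ of Lemma~\ref{sibd} together with the bound $\Lambda \leq 2m^\prime$ of Lemma~\ref{lambd} to rewrite the sieving requirement as condition~\eqref{cond6.1}. I would then separate $q=5$ from $q \geq 5^2$ and subdivide each according to whether $m^\prime$ equals $2m_1$, $4m_1$, $6m_1$ or none of these, where $m_1=\gcd(q-1,m^\prime)$, so that $\si(q,m^\prime)$ takes the values $1/2$, $3/8$, $13/36$ or at most $1/3$. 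In every subcase the Weil bounds reduce matters to finitely many pairs, which are then eliminated by the sieving choices tabulated in Table~\ref{table2}; this branch contributes only the two possible exceptions $(5,7)$ and $(5,9)$, while the range $q \geq 5^2$ contributes none.

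Finally I would subject every surviving candidate to the modified prime sieve of Theorem~\ref{modps}, checking whether~\eqref{cond5.1} can be forced to hold for some admissible splitting $\mathrm{Rad}(q^m-1)=kPL$ and $\mathrm{Rad}(x^m-1)=gGH$; the pairs for which no such splitting succeeds are exactly the eleven listed, and collecting the two branches gives $9+2=11$. The main obstacle I anticipate is not any individual estimate but the combinatorial bookkeeping of the sieve: for each borderline pair one must hunt for sieving parameters $(d,g)$ — equivalently a partition of the prime and irreducible factors into the ``small'' and ``sieved'' parts — that simultaneously keep $\lambda>0$ and force the inequality, and the stubborn pairs persist precisely because $q$ and $m$ are too small for the character-sum bounds to overcome the factor counts $W(q^m-1)$ and $W(x^m-1)$. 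These residual pairs cannot be resolved by the analytic bounds and remain as the at most eleven pairs for which existence of the desired primitive normal pair is not guaranteed by the method.
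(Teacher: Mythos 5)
Your proposal follows essentially the same route as the paper: the same dichotomy on whether $m^\prime$ divides $q^2-1$, the same use of Lemmas~\ref{lambdaeq}, \ref{wbd}, \ref{wbd2}, \ref{sibd} and \ref{lambd} to reduce each branch to finitely many candidates, the same clearing of borderline pairs via Theorem~\ref{thsieve} with the sieving data of Tables~\ref{table1} and \ref{table2}, and the same final (unsuccessful) application of the modified sieve of Theorem~\ref{modps} to the $9+2=11$ surviving pairs. The only quibble is terminological — the bounds in Lemmas~\ref{wbd} and \ref{wbd2} are elementary estimates on the square-free divisor count $W$, not Weil-type character-sum bounds (those are Lemmas~\ref{2.1} and \ref{2.2}) — but this does not affect the argument.
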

	\subsection{The case $m=4$}\label{m=4}
	In this case, $x^4-1$ splits into four linear factors over any field of characteristic $5$. Thus, $W(x^4-1)=2^4$. It follows from Theorem~\ref{main_theorem} that $(5^k,4)\in\T$ if
	\begin{equation} \label{m=4_cond}
	q > 9\cdot W(q^4-1)^2 \cdot 2^8,
	\end{equation}
	where $q=5^k$. To proceed further, we need the following bounds that can be easily derived from Lemma~6.2 of Cohen's work \cite{11}. 
	
	\begin{lemma}\label{wbd3}
         Let $\alpha$ be a positive integer, such that $5\nmid \alpha$. Then
         \begin{enumerate}[label=(\roman*)]
         	\item $W(\alpha)<2760.35\cdot \alpha^{1/8}$. 
         	\item $W(\alpha)<299243.62\cdot \alpha^{1/9}$.
         	\item $W(\alpha)<2.8592\cdot 10^{67}\cdot \alpha^{1/14}$.
         \end{enumerate}
	\end{lemma}
	
	Using the above, we replace $W(q^4-1)$ by $299243.62\cdot q^{4/9}$ in the condition of \eqref{m=4_cond}. Thus, the case $q>6.77\cdot 10^{128}$ is settled.
	
	The fact that this number is exceptionally large pushes us to adopt the following reduction technique, see \cite{cohengupta21,cohenkapetanakis20,cohenkapetanakis20b}. Let $p_i$ be the $i$-th prime, that is, $p_1=2$, $p_2=3$, $p_3=5$ and so on. Further, let $t_{m,k}$ be the number of distinct prime divisors of $5^{mk}-1$. Clearly, if $q=5^k$,
	\[ q^m-1 \geq \prod_{i=1}^{t_{m,k}} p_i \ \Rightarrow q > \left( \prod_{i=1}^{t_{m,k}} p_i\right)^{1/m} . \]
	A quick computation reveals that $\left( \prod\limits_{i=1}^{202} p_i\right)^{1/4} > 6.77\cdot 10^{128}$, that is,  the case $t_{4,k}\geq 202$ is settled. Then, we observe that if, for some $k$, there exist some $(t_1,t_2)\in\mathbb Z^2$, such that $t_1\leq t_{4,k}\leq t_2$ and
	\begin{equation} \label{eq:al1} \left( \prod_{i=1}^{t_1} p_i \right)^{1/4} > 9\cdot 4^{t_2} \cdot 4^4 , \end{equation}
	then Theorem~\ref{main_theorem} implies that the case $q=5^k$ is settled. We verify the validity of the above condition for $(t_1,t_2)$ equal to $(196,201)$, $(191,195)$, $(190,187)$, $(186,184)$, $(181,183)$, $(179,180)$, $(177,178)$, $(176,176)$, $(175,175)$, $(174,174)$ and $(173,173)$. So far, the case $t_{4,k}\geq 173$ is settled.
	
	However, the above reduction is inadequate for a concrete result, so we proceed by pushing the method even further closer to its limits. More precisely, we combine the above with prime sieve technique.
	Notice that, if the prime factors of $5^{4k}-1$ are (in ascending order) $q_1,\ldots ,q_{t_{4,k}}$, then, for every $i$ we have that $q_i\geq p_i$. It follows from Theorem~\ref{thsieve}, that the condition of \eqref{eq:al1} can be reduced to
	\begin{equation} \label{eq:al2}
	\left( \prod_{i=1}^{t_1} p_i \right)^{1/4} > 9\cdot 4^{t_2-r} \cdot \Lambda,
	\end{equation}
	where $r$ can be any integer within the range $0\leq r\leq t_1$ and $\Lambda = (2r+7)/\lambda+2$, where \[ \lambda = 1-\sum_{i=1}^{r} \frac{2}{p_i}-\frac{8}{\left( \prod\limits_{i=1}^{t_1} p_i \right)^{1/4}}. \]
	Note that we chose to sieve all the linear factors of $x^4-1$. In this application, we also choose $r=\lfloor t_1/2\rfloor$ and we easily verify the validity of \eqref{eq:al2} for the cases, where $(t_1,t_2)$ equals to $(122,172)$, $(92,121)$, $(72,91)$, $(59,71)$, $(50,58)$, $(44,49)$, $(40,43)$, $(38,39)$, $(36,37)$, $(34,35)$, $(33,33)$ and $(32,32)$. It follows that the case $t_{4,k}\geq 32$ is settled, which in turn implies that the case $q\geq 1.4155\cdot 10^{12}$ is settled, that is, the case $q=5^k$ with $k>17$ is settled.
	
	We proceed by checking the condition in \eqref{m=4_cond} by explicitly computing $W(q^4-1)$ for all $q=5^k$ and $1\leq k\leq 17$. It turns out that this is satisfied for all values of $k$, besides $k=1,2,3,4,5,6,7,8,9,10,12,15$. Finally, we successfully apply the prime sieve technique, see Theorem~\ref{thsieve}, for $k=6,7,8,9,10,12,15$, where we sieve the entire additive part and the maximum number of prime factors of $q^4-1$, such that $\lambda$ remains positive.
	
	Summing up, we managed to prove the desired result, with theoretical means, for the case $m=4$ and $q=5^k$, for all $k\neq 1,2,3,4,5$.
	\subsection{The case $m=3$}\label{m=3}
	Finally, we focus on the $m=3$ case. Here, over $\mathbb{F}_{5^k}$, the polynomial $x^3-1$ splits into three linear factors, if $k$ is even, and into one linear and one quadratic factor, if $k$ is odd. It follows, from Theorem~\ref{main_theorem}, that $(5^k,3)\in\T$ if
	\begin{equation} \label{m=3_cond}
	5^{k/2} > \begin{cases} 9\cdot W(q^3-1)^2 \cdot 2^6, &\text{if $k$ is even}, \\ 9\cdot W(q^3-1)^2 \cdot 2^4, &\text{if $k$ is odd}. \end{cases}
	\end{equation}
	
	Using Lemma~\ref{wbd3}, we replace $W(q^3-1)$ by $2.8592\cdot 10^{67}\cdot q^{1/14}$ in the condition of \eqref{m=3_cond} and first consider the strongest condition of the two. This yields that the case $q>2.63471\cdot 10^{1927}$ is settled. This implies that the case $q=5^k$, for $k\geq 2758$, is settled.
	
	We proceed with the same strategy as in the $m=4$ case, but this time we move immediately to the sieving part. Namely, the equivalent to \eqref{eq:al2} condition to our case is
	\begin{equation}\label{eq:al3}
	\left( \prod_{i=1}^{t_1} p_i \right)^{1/6} > 9\cdot 4^{t_2-r} \cdot \Lambda,
	\end{equation}
	where $r$ can be any integer within the range $0\leq r\leq t_1$ and $\Lambda = (2r+5)/\lambda+2$, where 
	\[ \lambda = 1-\sum_{i=1}^r \frac{2}{p_i}-\frac{6}{\left( \prod\limits_{i=1}^{t_1} p_i \right)^{1/3}}. \]
	Just like in the $m=4$ case, we chose to sieve all the linear factors of $x^3-1$, but, in this occasion, we also choose $r=\lfloor 3t_1/4\rfloor$. We verify the validity of \eqref{eq:al3} for the cases when $(t_1,t_2)$ equals to $(1578,2757)$, $(943,1577)$, $(588,942)$, $(383,587)$, $(260,382)$, $(184,259)$, $(136,183)$, $(104,135)$, $(83,103)$, $(68,82)$, $(58,67)$, $(51,57)$, $(46,50)$, $(42,45)$, $(39,41)$, $(37,38)$, $(36,36)$, $(35,35)$ and $(34,34)$.
	It follows that the case $t_{4,k}\geq 34$ is settled, which in turn implies that the case $q\geq 4.1611\cdot 10^{17}$ is settled, that is, the case $q=5^k$ with $k>25$ is settled.
	
	We proceed by checking the condition in \eqref{m=3_cond} with explicitly computing $W(q^3-1)$ for all $q=5^k$ and $1\leq k\leq 25$ and using the appropriate condition of the two, depending on the parity of $k$. A quick computation reveals that this is satisfied for all values of $k\leq 25$, except 
	\[ k=1, 2, 3, 4, 5, 6, 7, 8, 9, 10, 11, 12, 13, 14, 15, 16, 17, 18, 20, 21, 22, 24, 25. \]
	Among the above, we successfully apply the prime sieve technique, see Theorem~\ref{thsieve}, for $k=9$, and $11\leq k\leq 25$, where, once again, we sieve the entire additive part and the maximum number of prime factors of $q^4-1$, such that $\lambda$ remains positive.
	
	Summing up, we managed to prove the desired result, with theoretical means, for the case $m=3$ and $q=5^k$, for all $k\neq 1,2,3,4,5,6,7,8,10$.

	Again for all the above possible exceptional pairs we verified Theorem~\ref{modps} and found that for none of the pairs the modified prime sieving criterion (\ref{cond5.1}) holds for values of $k, L, g$ and $H$ chosen suitably. 
	\subsection{Concluding remarks}
	The results of Subsections~\ref{m=4} and \ref{m=3}, in conjunction with Theorem~\ref{main_exact_theorem}, yield the following:
	\begin{theorem}\label{exact_theorem_comb}
			Let $q=5^k$, $m \geq 3$ and $f\in\Fm(x)$ of degree sum $4$. Then for any $a \in \F$, there exists an element $\alpha \in \Fm$ with $\Tr_{\mathbb{F}_{q^m}/\mathbb{F}_{q}}(\alpha^{-1})=a$, for which $(\alpha, f(\alpha))$ is  a primitive normal pair  in $\mathbb{F}_{q^m}$ over $\mathbb{F}_{q}$ unless $(q, m)$ is one of the pairs $(5, 5)$, $(5^2, 5)$, $(5, 10)$, $(5^2, 6)$, $(5^2,8)$, $(5, 6)$, $(5, 7)$, $(5, 9)$, $(5, 8)$, $(5, 12)$, $(5, 24)$, $(5,4)$, $(5^2,4)$, $(5^3,4)$, $(5^4,4)$, $(5^5,4)$, $(5,3)$, $(5^2,3)$, $(5^3,3)$, $(5^4,3)$, $(5^5,3)$, $(5^6,3)$, $(5^7,3)$, $(5^8,3)$ or $(5^{10},3)$.
	\end{theorem}
	We conclude this work with an attempt to completely resolve the case $q=5^k$, $m \geq 3$ and $f\in\Fm(x)$ of degree sum $4$, i.e., the case of interest of Theorem~\ref{exact_theorem_comb}. This means that the \emph{possible} exceptions appearing in its statement have to be examined one-by-one. However the vast number of choices of $f$ render this impossible within a reasonable time limit.
	
	Instead, we run a computer script that checks the existence of some $\alpha_a \in \Fm$ with $\Tr_{\mathbb{F}_{q^m}/\mathbb{F}_{q}}(\alpha_a^{-1})=a$, for all $a\in\mathbb{F}_q$, for which $(\alpha, f(\alpha))$ is  a primitive normal pair  in $\mathbb{F}_{q^m}$ over $\mathbb{F}_{q}$, where $f=f_1/f_2\in \E_4$ and $f_1,f_2\in\Fm[x]$ are monic of positive degrees. The basic structure of our script is the following (given $(q,m)$ and some $f\in\E_4$):
	\begin{enumerate}[label=(\roman*)]
	  \item Take a primitive element $\alpha\in\Fm$ and the set $T = \emptyset$
	  \item For $1\leq j\leq q^m-1$, if $\gcd(j,q^m-1)=1$, check if $f(\alpha^j)$ is primitive and normal and if $\alpha^j$ is normal. \label{step:j}
	  \item If yes, calculate $\Tr (\alpha^{-j})$ and add it to $T$.
	  \item If at any point $|T|=q$, return {\tt True}. \label{step:T}
	  \item If $j$ reaches $q^m-1$ and still $|T|<q$, return {\tt False}.
	\end{enumerate}
	In particular, for all pairs $(q,m)$, that appear as possible exceptions on Theorem~\ref{exact_theorem_comb}, we checked for a number of appropriate choices of $f$ with the above restrictions, with this number for most pairs being close to 60000. However this number had to be reduced significantly for pairs where the base field was very large, or even reach zero in two cases. More precisely, the number of rational functions $f\in\E_4$ checked per pair $(q,m)$ is presented in Table~\ref{tab:checked_functions}.
	
	\begin{table}[h]
	\begin{center}
	  \begin{tabular}{ccc|ccc|ccc}
	    \hline \# & $(q,m)$ & $\# f\in\E_4$ & \# & $(q,m)$ & $\# f\in\E_4$ & \# & $(q,m)$ & $\# f\in\E_4$ \\
	    \hline
	    1 & $(5,5)$ & $60746$ & 10 & $(5,12)$ & $60746$ & 19 & $(5^3,3)$ & $60746$ \\
	    2 & $(5^2,5)$ & $60746$ & 11 & $(5,24)$ & $60746$ & 20 & $(5^4,3)$ & $2838$ \\
	    3 & $(5,10)$  & $60746$ & 12 & $(5,4)$ & $3$ & 21 & $(5^5,3)$ & $594$ \\
	    4 & $(5^2,6)$ & $60746$ & 13 & $(5^2,4)$ & $60746$ & 22 & $(5^6,3)$ & $178$ \\
	    5 & $(5^2,8)$ & $60746$ & 14 & $(5^3,4)$ & $60746$ & 23 & $(5^7,3)$ & $19$ \\
	    6 & $(5,6)$ & $60746$ & 15 & $(5^4,4)$ & $1480$ & 24 & $(5^8,3)$ & $0$ \\
	    7 & $(5,7)$ & $60746$ & 16 & $(5^5,4)$ & $594$ & 25 & $(5^{10},3)$ & $0$ \\
	    8 & $(5,9)$ & $60746$ & 17 & $(5,3)$ & $4$ \\
	    9 & $(5,8)$ & $60746$ & 18 & $(5^2,3)$ & $60746$ \\
	    \hline
	  \end{tabular}
	\end{center}
	  \caption{Number of functions $f\in\E_4$ checked per pair $(q,m)$.\label{tab:checked_functions}}
	\end{table}
	A few remarks regarding Table~\ref{tab:checked_functions} are the following:
	\begin{enumerate}[label=(\roman*)]
	  \item The pairs $(5,4)$ and $(5,3)$ turned out to be the only verified genuine exceptions. More precisely, for the pair $(5,4)$ we checked three $f\in\E_4$, namely, $x^2/(x^2+1)$, $x/(x^3+1)$ and $x^3/(x+1)$ and all of them failed. For the pair $(5,3)$ we checked in addition the function $x^2/(x^2+2)$ which also failed, but in this occasion the function $x^2/(x^2+1)$ was not an exception, that is, 3 out of 4 functions failed.
	  \item As already mentioned, when the base field's cardinality was $q\geq 5^4$, the script started being slow. The underlying reason is that, not only the extension field grows at the same time, making every calculation more expensive in computer time, but the fact that the set $T$ will have to grow exponentially more (see Step~\ref{step:T} above), thus Step~\ref{step:j} is repeated exponentially more times. In particular, in the cases $q=5^8$ and $q=5^{10}$ the script was running several days but still did not verify a single $f\in\E_4$. However, the large cardinality of $\Fm$ suggest that these cases are not genuine exceptions.
	\end{enumerate}
	
	All in all, this search needed several weeks of computer time and it turns out that the only verified exceptions are $(q,m)$ being $(5,4)$ and $(5,3)$ and, as already mentioned, we are confident that these are in fact the only exception. So, even though an exhaustive search is out of reach, at least for our limited computational resources, we have collected strong computational data that, combined with Theorem~\ref{exact_theorem_comb}, are enough to support the following:
	\begin{Conjecture}
	Let $q=5^k$, $m \geq 3$ and $f\in\Fm(x)$ of degree sum $4$. Then for any $a \in \F$, there exists an element $\alpha \in \Fm$ with $\Tr_{\mathbb{F}_{q^m}/\mathbb{F}_{q}}(\alpha^{-1})=a$, for which $(\alpha, f(\alpha))$ is  a primitive normal pair  in $\mathbb{F}_{q^m}$ over $\mathbb{F}_{q}$ unless $(q, m)$ is one of the pairs $(5,4)$ or $(5,3)$.
	\end{Conjecture}

	\section{Acknowledgements}
	
	The first author is supported by DST INSPIRE Fellowship(IF210206).
	\section{Declarations}
	
	\textbf{Conflict of interest} The authors declare that there is no conflict of interest.
	
	\bibliographystyle{plain}
	\bibliography{reference}
	
\end{document}